\numberwithin{equation}{section}
\newtheoremstyle{thmlemcorr}{10pt}{10pt}{\itshape}{}{\bfseries}{.}{10pt}{{\thmname{#1}\thmnumber{ #2}\thmnote{ (#3)}}}
\newtheoremstyle{thmlemcorr*}{10pt}{10pt}{\itshape}{}{\bfseries}{.}\newline{{\thmname{#1}\thmnumber{ #2}\thmnote{ (#3)}}}
\newtheoremstyle{remexample}{10pt}{10pt}{}{}{\bfseries}{.}{10pt}{{\thmname{#1}\thmnumber{ #2}\thmnote{ (#3)}}}
\newtheoremstyle{ass}{10pt}{10pt}{}{}{\bfseries}{.}{10pt}{{\thmname{#1}\thmnumber{ A#2}\thmnote{ (#3)}}}
\theoremstyle{thmlemcorr}
\newtheorem{theorem}{Theorem}
\numberwithin{theorem}{section}
\newtheorem{definition}[theorem]{Definition}
\theoremstyle{thmlemcorr*}
\newtheorem{theorem*}{Theorem}
\newtheorem{lemma*}[theorem]{Lemma}
\newtheorem{corollary*}[theorem]{Corollary}
\newtheorem{proposition*}[theorem]{Proposition}
\newtheorem{problem*}[theorem]{Problem}
\newtheorem{conjecture*}[theorem]{Conjecture}
\newtheorem{definition*}[theorem]{Definition}
\theoremstyle{remexample}
\newtheorem{remark}[theorem]{Remark}
\theoremstyle{ass}
\newcommand{\T}{\mathbb{T}}
\DeclareMathOperator{\diverg}{div}
\DeclareMathOperator{\trace}{trace}
\newcommand{\norm}[1]{\|#1\|}
\newcommand{\N}{\mathbb{N}}
\newcommand{\R}{\mathbb{R}}
\newcommand{\To}{\mathbb{T}}
\newcommand{\term}[1]{\textbf{#1}}
\def\XXint#1#2#3{{\setbox0=\hbox{$#1{#2#3}{\int}$}
\vcenter{\hbox{$#2#3$}}\kern-.5\wd0}}
\renewcommand{\epsilon}{\varepsilon}
\renewcommand{\phi}{\varphi}
\begin{document}

%% TITLE MATTERS

\title[Weak-Strong Uniqueness]{Weak-Strong Uniqueness in Fluid Dynamics}

\author{Emil Wiedemann}
\address{\textit{Emil Wiedemann:} Institute of Applied Mathematics, Leibniz University Hannover, Welfengarten~1, 30167 Hannover, Germany}
\email{wiedemann@ifam.uni-hannover.de}

\begin{abstract}
We give a survey of recent results on weak-strong uniqueness for compressible and incompressible Euler and Navier-Stokes equations, and also make some new observations. The importance of the weak-strong uniqueness principle stems, on the one hand, from the instances of non-uniqueness for the Euler equations exhibited in the past years; and on the other hand from the question of convergence of singular limits, for which weak-strong uniqueness represents an elegant tool.  
\end{abstract}
%\vspace{4pt}

%\noindent\textsc{MSC (2010): XXXXX (primary);}

%\noindent\textsc{Keywords:}

%\vspace{4pt}

%\noindent\textsc{Date:} \today{}.
%\end{abstract}

%% PDF MATTERS

%% START OF CONTENT

\maketitle

%\hrule\vspace{1pt}
%\begin{center}
%\textbf{\large
%DRAFT (version of \today)}
%\end{center}
%\hrule
%\vspace{10mm}

%\setcounter{tocdepth}{1}
%\tableofcontents

%\newpage

\section{Introduction}
The Euler and Navier-Stokes equations can be viewed as the fundamental partial differential equations of the continuum mechanics of fluids. Known for centuries, their rigorous mathematical treatment has however remained vastly incomplete. For a system of time-dependent differential equations supposed to describe classical mechanics, one would expect existence and uniqueness of solutions given suitable initial (and possibly boundary) conditions. In addition, in any real situation the data is necessarily subject to measurement errors, and so one should require small initial perturbations to cause only small perturbations at later times. These three properties -- existence, uniqueness, and stability of solutions -- are known as the \term{well-posedness} of a partial differential equation. Of course, to make sense of the notion of well-posedness, one has to make various specifications of the problem, like the function spaces one works in.

In the realm of fluid dynamics, one often talks about \term{laminar} versus \term{turbulent} flows, the former being characterised by a smooth, predictable dynamics of the fluid, while the latter features chaotic and irregular behaviour. We will not become more precise about this distinction, but the reader should imagine a laminar flow when we talk about a ``smooth" or ``strong" solution (in a class like $C^1$ or better), and a turbulent one when we mention ``weak" solutions (which could be as bad as a generic $L^2$ function).

For both the Euler and the Navier-Stokes equations (compressible or incompressible), in the physically most relevant case of three space dimensions, well-posedness is known only locally in time and in function spaces of high regularity (specifically, $C^{1,\alpha}$ or Sobolev spaces that embed into $C^{1,\alpha}$). We refer the reader to the well-known textbooks in the field, e.g.~\cite{constantinfoias, temam, bertozzimajda, marchioropulvirenti, lions, lions2, feireislbook, galdinotes, robinsonetal}. Thus, a laminar flow remains laminar at least for a short time. Whether or not blow-up can actually occur after a finite time is in fact one of the Clay Foundation's Millennium Problems.

On the other hand, it is known that well-posedness can fail for the three-dimensional Euler equations in spaces of lower regularity, like H\"older spaces $C^\alpha$ \cite{bardostiti} or critical Sobolev spaces \cite{bourgainli}. This means that a flow initially belonging to such a class may drop out of this class instantaneously.

More shockingly, at lower regularity one may observe \term{non-uniqueness}: Two distinct solutions of the same equation can arise from the same initial data. This was shown for the first time by Scheffer \cite{scheffer} (see also \cite{shnirel1}), who constructed a nontrivial weak solution of the 2D incompressible Euler equations (in $L^2$) with compact support in time. More recently, De Lellis and Sz\'ekelyhidi recovered and extended these examples in a series of groundbreaking papers (among them \cite{euler1, inventiones}). Their adaptation of Gromov's technique of \term{convex integration} led to various important new results, like non-uniqueness of weak solutions satisfying additional energy (in)equalities \cite{euler2, euleryoung, danerisz}, global existence and non-uniqueness for arbitrary initial data in $L^2$ \cite{eulerexistence}, non-uniqueness of entropy solutions for the isentropic compressible Euler equations \cite{euler2, chiodaroli, chiodarolidelelliskreml}, and, most recently, the proof of Onsager's Conjecture on energy-dissipating H\"older continuous solutions \cite{isett16, buckmasteretal17}. It remains open however if weak solutions of the Euler equations exist for all finite energy data.

For the Navier-Stokes equations, at least the global existence of weak solutions is known (Leray \cite{leray} for the incompressible and Lions \cite{lions2} and Feireisl-Novotn\'y-Petzeltov\'a \cite{feireislnovotnypetzeltova} for the compressible case), but their uniqueness remains an open problem. An interesting indication of non-uniqueness has been demonstrated recently \cite{sverak}.

In the light of these results, is there no hope to obtain unique weak solutions? There are two conceivable remedies: Either one imposes additional admissibility criteria to single out a unique solution, or one contends oneself with a conditional form of uniqueness. The first strategy is still at a very speculative stage and does not currently promise much success. In addition, it may be objected that the concept of a deterministic solution is rather meaningless in the turbulent regime, where any uniquely determined solution may be so unstable that it would be practically useless.

The second strategy is \term{weak-strong uniqueness} and was established by Leray \cite{leray}, Prodi \cite{prodi}, and Serrin \cite{serrin} for the incompressible Navier-Stokes equations, and by Dafermos \cite{Daf} for conservation laws. The general principle can be stated as follows:

\emph{If there exists a strong solution, then any weak solution with the same initial data coincides with it.}

Note carefully that is not merely a uniqueness result of the standard form ``If two solutions of the same type share the same initial data, then they are identical", as encountered in classical well-posedness theories. Rather, the strong solution is unique even when compared to all \emph{weak} solutions. We may thus slightly reformulate the weak-strong uniqueness principle:

\emph{Strong solutions are unique in the potentially much larger class of weak solutions.}

The meaning of ``strong" and ``weak" will be made precise in each particular situation. In any case, it is crucial that the notion of weak solution include some sort of \term{energy inequality}: If this ingredient is removed, then already the Scheffer-Shnirelman construction yields a counterexample to weak-strong uniqueness. Another restriction is that for \emph{inviscid} models we need to work on domains without boundaries, see Section \ref{boundary} below. 

For the moment, however, note that weak-strong uniqueness is only helpful in the laminar regime, as it presupposes the existence of a strong (``laminar") solution. Very roughly, one could say that weak-strong uniqueness tells that the laminar and turbulent regimes are in a sense disjoint: \emph{Either} the flow is laminar and represented by a unique strong solution, \emph{or} it is turbulent and represented by possibly very many weak solutions.

There is another reason for the recently refreshed interest in weak-strong uniqueness: The principle can come in very handy for the convergence of singular limits, such as vanishing viscosity, hydrodynamic limits, or numerical approximation schemes. The generic question, which is very classical, can be stated as follows:

\emph{Suppose the limit system has a strong solution. Does the singular limit converge to this solution?}  

Suppose, for instance, one considers the incompressible Euler equations with smooth initial data, so that by classical well-posedness results there exists a smooth solution at least for a certain time period. It is arguably rather difficult to show directly that the solutions of the Navier-Stokes equations converge to this solution as the viscosity tends to zero\footnote{For the incompressible Navier-Stokes-to-Euler limit without boundaries, there are several classical results on this question. However the question may be much harder for other systems of fluid mechanics.}. But it may be much easier to show that the Navier-Stokes solutions converge to some weak kind of solution to the Euler equations. Then weak-strong uniqueness kicks in: Indeed, the weak solution will automatically be the smooth one, and thus convergence of the viscosity limit to the smooth solution of Euler is shown.

Unfortunately, it is presently not even possible to show convergence of a zero viscosity sequence to a weak solution of the Euler equations in the sense of distributions. Instead, one has to relax even further the notion of solution. This was done by P.-L.\ Lions \cite{lions}, who essentially built the weak-strong uniqueness property into his definition of \term{dissipative solutions}.\footnote{The terminology has become somewhat ambiguous, as ``dissipative solution" has recently often been used to refer to a solution in the sense of distributions that satisfies some form of the energy inequality, or to a solution which satisfies such an inequality in a strict way.}  It is easy to show the convergence of a vanishing viscosity sequence to such a dissipative solution and thus to the strong solution, if the latter exists. For applications to hydrodynamic limits, see \cite{saintraymond} and references therein. 

We prefer here to work with so-called \term{measure-valued solutions}, as introduced by DiPerna and Majda \cite{dipernamajda} for the incompressible Euler equations. This notion of solution also enjoys the weak-strong uniqueness property (as shown in \cite{brenieretal}) and can therefore be considered ``equivalent" to Lions' dissipative solutions. Weak-strong uniqueness for measure-valued solutions has recently been proved for various other equations of fluid dynamics \cite{GwSwWi, feireislgwiazdawsu, brezinamacha, brezinafeireisl} and applied to the viscosity limit of nonlocal Euler equations \cite{brezinamacha} and a finite element/finite volume scheme for the compressible Navier-Stokes equations \cite{feireisllukacova}. The latter result in particular shows that the weak-strong uniqueness principle can be of concrete practical use in numerical analysis (and certainly much more is to be explored in this direction).      

The aim of this paper is to give a survey over the main ideas in the topic, rather than a complete overview of all available results. There are however two original contributions: In Section \ref{mvs}, we give a slight simplification and generalisation of the result of Brenier et al.\ \cite{brenieretal}, adapting their arguments to the framework of \cite{feireislgwiazdawsu}; and in Section \ref{alternative} we present an alternative proof of weak-strong uniqueness for weak solutions of the incompressible Euler equations, inspired by the notion of statistical solutions in the sense of Fjordholm et al.\ \cite{fjordholmetal}.

The article is organised as follows: In Section \ref{relenergy}, we introduce the relative energy method, which is at the heart of all our arguments, for the simplest case of the incompressible Euler equations, and also give the calculation for the isentropic Euler system. Section \ref{mvs} presents the measure-valued framework in the formalism of \cite{feireislgwiazdawsu} and applies this setup to the incompressible Euler equations. In Section \ref{viscosity} we discuss the incompressible and compressible Navier-Stokes equations, noting that the techniques of Sections \ref{mvs} and \ref{viscosity} can be combined to yield weak-strong uniqueness for measure-valued solutions of the compressible Navier-Stokes equations as in \cite{feireislgwiazdawsu}. Our treatment of the viscous term in the compressible case is based on \cite{FeJiNo}. For Euler models, physical boundaries pose an additional difficulty, as weak-strong uniqueness may then fail. We discuss a possible remedy in Section \ref{boundary}. Section \ref{alternative} then gives an alternative proof of weak-strong uniqueness for the incompressible Euler equations. 
              
\term{Acknowledgements.} The author would like to thank Eduard Feireisl, Ulrik Skre Fjordholm, and Siddharta Mishra for very helpful discussions on the topic.

\section{The Relative Energy Method}\label{relenergy}
In this section we demonstrate the relative energy method, which forms the basis of all proofs of weak-strong uniqueness, in the simplest case of the incompressible Euler equations, and then in the slightly more tedious case of the isentropic compressible Euler equations, on the torus. Recall that the incompressible Euler equations are given as
\begin{equation}\label{euler}
\begin{aligned}
\partial_t u + \diverg(u\otimes u)+\nabla p&=0,\\
\diverg u&=0,
\end{aligned}
\end{equation}
and a vector field $u\in L^2(\T^d\times [0,T])$ is called a \term{weak solution} of this system with initial datum $u^0$ if, for almost every $\tau\in (0,T)$ and every divergence-free vector field $\phi\in C^1(\T^d\times [0,T];\R^d)$, we have  
\begin{equation*}
\begin{aligned}
&\int_0^\tau\int_{\T^d}\partial_t\phi(x,t)\cdot u(x,t)+\nabla\phi(x,t):(u\otimes u)(x,t)dxdt\\
&\hspace{3cm}=\int_{\T^d}u(x,\tau)\cdot\phi(x,\tau)-u^0(x)\cdot\phi(x,0)dx,
\end{aligned}
\end{equation*}
and if $u$ is weakly divergence-free in the sense that
\begin{equation*}
\int_{\T^d}u(x,\tau)\cdot\nabla \psi(x)dx=0
\end{equation*}
for almost every $\tau\in(0,T)$ and every $\psi\in C^1(\T^d)$. Note that the pressure disappeared from the equations in the weak formulation, as it is merely a Lagrange multiplier. Once a weak solution $u$ has been found, one can find a suitable pressure by solving the Poisson equation
\begin{equation*}
-\Delta p=\diverg\diverg(u\otimes u).
\end{equation*}
We call a weak solution $u$ \term{globally admissible}\footnote{Global admissibility is a weaker condition than the local variant of the energy inequality, which resembles the entropy condition in the theory of conservation laws.} if $u\in L^\infty((0,T);L^2(\T^d))$, and
\begin{equation*}
\frac{1}{2}\int_{\To^d}|u(x,\tau)|^2dx\leq \frac{1}{2}\int_{\To^3}|v^0(x)|^2dx
\end{equation*} 
for almost every $\tau\in (0,T)$. In fact one can show (see the Appendix in \cite{euler2}) that a globally admissible weak solution can be altered on a set of times of measure zero so that it becomes continuous from $\tau$ into $L^2(\T^d)$ equipped with the weak topology. In particular $u(\cdot,\tau)$ is well-defined for \emph{every} time $\tau\in[0,T]$, and the initial datum is the weak limit of $u(\cdot,\tau)$ as $\tau\searrow 0$.

We can now state and prove our prototypical weak-strong uniqueness result:
\begin{theorem}\label{basicthm}
Let $u\in L^\infty((0,T);L^2(\mathbb{T}^d))$ be a weak solution and $U\in C^1(\mathbb{T}^d\times[0,T])$ a strong solution of \eqref{euler}, and assume that $u$ and $U$ share the same initial datum $u^0$. Assume moreover that
\begin{equation*}
\frac{1}{2}\int_{\T^d}|u(x,\tau)|^2dx\leq\frac{1}{2}\int_{\T^d}|u^0(x)|^2dx
\end{equation*}
for almost every $\tau\in(0,T)$. Then $u(x,\tau) =U(x,\tau)$ for almost every $(x,\tau)\in\To^d\times(0,T)$.
\end{theorem}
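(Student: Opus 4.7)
The plan is to apply the classical relative energy method. Define
\begin{equation*}
E(\tau) := \tfrac{1}{2}\int_{\T^d}|u(x,\tau)-U(x,\tau)|^2\,dx
= \tfrac12\|u(\tau)\|_{L^2}^2 + \tfrac12\|U(\tau)\|_{L^2}^2 - \int_{\T^d} u(x,\tau)\cdot U(x,\tau)\,dx,
\end{equation*}
and show that $E$ satisfies a Gr\"onwall inequality starting from $E(0)=0$. The first term on the right is bounded by $\tfrac12\|u^0\|_{L^2}^2$ via the assumed energy inequality, while the second equals $\tfrac12\|U^0\|_{L^2}^2$ since $U$ is a classical solution (test the strong momentum equation by $U$ and integrate by parts, using $\diverg U=0$). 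Because $u^0=U^0$, everything then reduces to controlling the mixed term $\int u(\tau)\cdot U(\tau)\,dx$.

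For this, I would use $\phi=U$ as a test function in the weak formulation of $u$: since $U\in C^1(\T^d\times[0,T])$ is divergence-free, it is admissible, and the formula gives
\begin{equation*}
\int_{\T^d} u(\tau)\cdot U(\tau)\,dx - \int_{\T^d}|u^0|^2\,dx = \int_0^\tau\!\int_{\T^d}\bigl[\partial_t U\cdot u + \nabla U:(u\otimes u)\bigr]\,dx\,dt.
\end{equation*}
Since $U$ is $C^1$, the strong equation $\partial_t U = -(U\cdot\nabla)U - \nabla P$ holds pointwise and may be substituted. The pressure contribution $\int u\cdot\nabla P\,dx$ vanishes for a.e.\ $t$ by the weak divergence-free condition applied to the smooth test function $P(\cdot,t)$. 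Combining with the bounds on $\tfrac12\|u(\tau)\|_{L^2}^2$ and $\tfrac12\|U(\tau)\|_{L^2}^2$ from the previous paragraph yields
\begin{equation*}
E(\tau) \le \int_0^\tau\!\int_{\T^d}\bigl[((U\cdot\nabla)U)\cdot u - \nabla U:(u\otimes u)\bigr]\,dx\,dt.
\end{equation*}

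The decisive step is to recognise this integrand as $-\nabla U:((u-U)\otimes(u-U))$. This requires adding and subtracting the two ``missing'' quadratic terms $\int\nabla U:(u\otimes U)\,dx = \int u\cdot\nabla(\tfrac12|U|^2)\,dx$ and $\int\nabla U:(U\otimes U)\,dx = \int U\cdot\nabla(\tfrac12|U|^2)\,dx$, both of which vanish because $u$ and $U$ are (weakly) divergence-free and $\tfrac12|U|^2\in C^1$. The pointwise estimate $|\nabla U:((u-U)\otimes(u-U))|\le\|\nabla U(\cdot,t)\|_{L^\infty}|u-U|^2$ then gives
\begin{equation*}
E(\tau) \le 2\int_0^\tau \|\nabla U(\cdot,t)\|_{L^\infty}\,E(t)\,dt,
\end{equation*}
and Gr\"onwall, together with $E(0)=0$, forces $E\equiv 0$, so $u=U$ a.e. The main technical obstacles I foresee are the pressure cancellation (which relies on testing the weak divergence-free condition against a smooth gradient rather than a compactly supported one) and, closely related, the algebraic rearrangement into the coercive quadratic form in $u-U$, which is precisely where the incompressibility of \emph{both} solutions is essential. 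Giving meaning to the pointwise values $u(\tau)\cdot U(\tau)$ for every $\tau$ relies on the weak continuity of $u$ into $L^2(\T^d)$ noted before the theorem.
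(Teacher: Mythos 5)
Your proposal is correct and follows essentially the same route as the paper: the same relative energy, the same use of $U$ as a test function in the weak formulation together with the energy inequality for $u$ and the energy equality for $U$, the same cancellation of the pressure and of the cross terms $\int (u-U)\cdot\nabla\tfrac12|U|^2\,dx$ via the (weak) divergence-free conditions, and the same Gr\"onwall conclusion. The rearrangement into the quadratic form $-\nabla U:((u-U)\otimes(u-U))$ is exactly the paper's identity $\int(U-u)\cdot\nabla_{\mathrm{sym}}U\,(u-U)\,dx$, so there is nothing substantive to add.
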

\begin{proof}
We define for almost every $\tau\in(0,T)$ the \term{relative energy} $E_{rel}$ and compute 
%\begin{equation}
\begin{align*}
E_{rel}(\tau)&:= \frac{1}{2}\int_{\To^d}|u(x,\tau)-U(x,\tau)|^2dx\\
&=\frac{1}{2}\int_{\To^3}|U(x,\tau)|^2dx+\frac{1}{2}\int_{\To^d}|u(x,\tau)|^2dx-\int_{\To^d}u(x,\tau)\cdot U(x,\tau)dx\\
&\leq \frac{1}{2}\int_{\To^3}|u^0(x)|^2dx+\frac{1}{2}\int_{\To^d}|u^0(x)|^2dx-\int_{\To^d}u(x,\tau)\cdot U(x,\tau)dx\\
&=\int_{\To^3}|u^0(x)|^2dx-\int_{\To^3}u^0(x)\cdot u^0(x)dx\\
&\hspace{1cm}-\int_0^\tau\int_{\To^d}[\partial_tU(x,t)\cdot u(x,t)+\nabla U(x,t):(u\otimes u)(x,t)]dxdt\\
%\displaybreak
&=-\int_0^\tau\int_{\To^d}[\partial_tU(x,t)\cdot u(x,t)+\nabla U(x,t):(u\otimes u)(x,t)]dxdt\\
&=\int_0^\tau\int_{\To^d}[\diverg(U\otimes U)(x,t)\cdot u(x,t)-\nabla U(x,t):(u\otimes u)(x,t)]dxdt\\
&=\int_0^\tau\int_{\To^d}[(U-u)(x,t)\cdot\nabla_{sym} U(x,t)(u-U)(x,t)]dxdt\\
&\leq \int_0^\tau\norm{\nabla_{sym}U(t)}_{L^\infty(\To^d)}\int_{\To^d}|u-U|^2(x,t)dxdt\\
&= 2\int_0^\tau\norm{\nabla_{sym}U(t)}_{L^\infty(\To^d)}E_{rel}(t)dt,
\end{align*}
%\end{equation}
and the claim follows from Gr\"onwall's inequality. In this calculation, we used the weak energy inequality for $u$, the energy equality for $U$, the definition of $u$ being a weak solution, the fact that $U$ is a strong solution, and the identities 
\begin{equation*}
\diverg(U\otimes U)=\nabla UU,
\end{equation*} 
\begin{equation*}
\nabla U:(u\otimes u)=u\cdot\nabla U u,
\end{equation*}
and
\begin{equation*}
\int_{\To^d}(U-u)\cdot\nabla U Udx=0,
\end{equation*}
the first and third of these equalities making use of the divergence-free property of $U$ and $u$. Note also that $(x,Ax)=(x,A_{sym}x)$ for any $x\in\R^d$ and $A\in\R^{d\times d}$, so that indeed it suffices to consider the symmetric gradient.
\end{proof}

\begin{remark}
The Gr\"onwall estimate suggests that the property $\nabla_{sym} U\in L^1_tL^\infty_x$ suffices for the strong solution, and we do not need to assume $U\in C^1_{t,x}$. This is probably true (and in fact claimed in \cite{brenieretal}), but requires some further approximation arguments that the author considers non-trivial. We will not pursue this problem here.
\end{remark}

Let us demonstrate the usefulness of the relative energy method for a somewhat more complicated fluid model, the \term{isentropic compressible Euler equations} with adiabatic exponent $\gamma>1$:
\begin{equation}\label{isentropic}
\begin{aligned}
\partial_t (\rho u) + \diverg(\rho u\otimes u)+\nabla \rho^\gamma&=0,\\
\partial_t\rho+\diverg (\rho u)&=0.
\end{aligned}
\end{equation}

Again we can make sense of these equations in the sense of distributions: A pair $(\rho,u)$ is a \term{weak solution} of the isentropic system with initial datum $(\rho^0,u^0)$ if $\rho\in L^\gamma(\T^d\times(0,T))$, $\rho|u|^2\in L^1(\T^d\times (0,T))$, and

\begin{equation}\label{Emass_momentum}
\begin{aligned}
\int_0^\tau\int_{\T^d}\partial_t\psi \rho+\nabla\psi\cdot{\rho u}dxdt+\int_{\T^d}\psi(x,0)\rho^0-\psi(x,\tau)\rho(x,\tau)dx&=0,\\
\int_0^\tau\int_{\T^d}\partial_t\phi\cdot{\rho u}+\nabla\phi : {(\rho u\otimes u)}+\diverg\phi{\rho^\gamma}dxdt\\
+\int_{\T^d}\phi(x,0)\cdot \rho^0u^0-\phi(x,\tau)\cdot{\rho u}(x,\tau)dx&=0
\end{aligned}
\end{equation}
for almost every $\tau\in(0,T)$, every $\psi\in C^1(\T^d\times[0,T])$, and every $\phi\in C^1(\T^d\times[0,T];\R^d)$. In analogy with the incompressible situation, we say the solution is \term{globally admissible} if for almost every $\tau\in(0,T)$
\begin{equation}\label{Emvsenergy}
E(\tau)\leq E^0,
\end{equation}
where the energy $E$ is defined as
\begin{equation*}
E(\tau)=\frac{1}{2}\int_{\T^d}\rho(x,\tau)|u(x,\tau)|^2dx+\frac{1}{\gamma-1}\rho(x,\tau)^\gamma dx,
\end{equation*}
and
\begin{equation*}
E^0=\frac{1}{2}\int_{\T^d}\rho^0(x)|u^0(x)|^2dx+\frac{1}{\gamma-1}\rho^0(x)^\gamma dx.
\end{equation*}
Note that the global admissibility condition is weaker than the local energy equality (the so-called \term{entropy condition}) usually invoked in the study of compressible Euler systems.

 As an analogue of the preceding theorem, we have
\begin{theorem}\label{Eweak-strong}
Let $R,U\in C^1(\To^d\times[0,T])$ be a solution of \eqref{isentropic} with initial data $\rho^0,u^0$ such that $\rho^0\geq c>0$ and $R\geq c>0$. If $(\rho,u)$ is an admissible solution with the same initial data, then $\rho\equiv R$ and $u\equiv U$ almost everywhere on $\To^d\times(0,T)$. 
\end{theorem}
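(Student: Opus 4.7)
The plan is to extend the relative-energy argument of Theorem \ref{basicthm} by replacing $\tfrac{1}{2}|u-U|^2$ with a quantity adapted to the isentropic system that measures both the velocity and density gaps. The natural choice is the \emph{relative entropy}
\begin{equation*}
E_{rel}(\tau):=\int_{\T^d}\left[\tfrac{1}{2}\rho|u-U|^2+P(\rho)-P'(R)(\rho-R)-P(R)\right](x,\tau)\dd x,
\end{equation*}
with $P(\rho):=\rho^\gamma/(\gamma-1)$; note that $\rho P'(\rho)-P(\rho)=\rho^\gamma$ reproduces the pressure. By strict convexity of $P$, the integrand is nonnegative and vanishes exactly when $\rho=R$ and $u=U$ on $\{\rho>0\}$. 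Since $R$ is $C^1$ and bounded below by $c>0$, a Taylor expansion at $\rho=R$ together with the growth $P(\rho)\sim\rho^\gamma$ at infinity shows that $E_{rel}(\tau)$ coercively controls $\int\rho|u-U|^2\dd x$ and an integral quantifying $|\rho-R|$ (with quadratic behaviour near $R$ and $\gamma$-growth for large $\rho$).

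My aim is a Grönwall-type bound $E_{rel}(\tau)\leq K\int_0^\tau E_{rel}(t)\dd t$, from which $E_{rel}(0)=0$ (equal initial data) forces $E_{rel}\equiv 0$, hence $\rho=R$ a.e.\ and then $u=U$ a.e. To derive it, I would expand $\tfrac{1}{2}\rho|u-U|^2=\tfrac{1}{2}\rho|u|^2-\rho u\cdot U+\tfrac{1}{2}\rho|U|^2$ and rearrange the internal-energy part, producing four pieces to control. The total energy $\int[\tfrac{1}{2}\rho|u|^2+P(\rho)]$ at time $\tau$ is bounded by $E^0$ via the admissibility condition \eqref{Emvsenergy}. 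The cross term $-\int\rho u\cdot U$ is produced by testing the weak momentum equation in \eqref{Emass_momentum} with $\phi=U$. The kinetic remainder $\tfrac{1}{2}\int\rho|U|^2$ and the linear pressure cross term $-\int P'(R)\rho$ are produced by testing the weak continuity equation with $\psi=\tfrac{1}{2}|U|^2$ and $\psi=P'(R)=\tfrac{\gamma}{\gamma-1}R^{\gamma-1}$, respectively; both are admissible $C^1$ test functions because $(R,U)$ is classical and $R\geq c>0$. Finally, $\int P(R)$ is differentiated directly in time.

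Every $\partial_t U$ and $\partial_t R$ that has appeared is then eliminated using the pointwise equations $\partial_t R=-\diverg(RU)$ and $\partial_t U+U\cdot\nabla U+\nabla P'(R)=0$ satisfied by the classical solution $(R,U)$. After these substitutions and a few integrations by parts on the torus, all contributions that are only linear in $u-U$ or $\rho-R$ should cancel algebraically, leaving a quadratic remainder of the schematic form
\begin{equation*}
E_{rel}(\tau)\leq\int_0^\tau\int_{\T^d}\bigl[-\rho(u-U)\cdot\nabla_{sym}U(u-U)-[P(\rho)-P'(R)(\rho-R)-P(R)]\diverg U+\mathcal{R}\bigr]\dd x\dd t,
\end{equation*}
where $\mathcal{R}$ collects further terms quadratic in $(u-U,\rho-R)$ with coefficients built from $R$, $\nabla R$ and $\nabla U$ (all bounded because $(R,U)\in C^1$ and $R\geq c$). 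The coercivity of $E_{rel}$ then dominates every summand by $KE_{rel}(t)$, and Grönwall closes the argument.

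I expect the main obstacle to be the bookkeeping of the cancellations in the pressure sector: it is essential that the contributions linear in $\rho-R$ arising from $\nabla P'(R)$ in the momentum equation and from the choice $\psi=P'(R)$ in the continuity equation conspire to vanish, leaving only quadratic residuals. This algebraic feature is the whole reason for the precise form of the relative entropy above, and is the nontrivial analogue of the integration by parts that killed the divergence-free pressure in the proof of Theorem \ref{basicthm}. A secondary technical point is that the assumption $R\geq c>0$ is used essentially, both to keep $P'(R)$ smooth, bounded, and bounded away from zero, and to ensure the quadratic coercivity of the relative internal energy near $\rho=R$; a possible vacuum in the \emph{weak} density $\rho$ is harmless, because there $\rho|u-U|^2$ simply vanishes and one needs only to propagate $\rho=R$, after which $u=U$ follows on all of $\T^d$.
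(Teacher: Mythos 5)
Your proposal is correct and follows essentially the same route as the paper: your relative entropy $P(\rho)-P'(R)(\rho-R)-P(R)$ with $P(\rho)=\rho^\gamma/(\gamma-1)$ expands to exactly the functional $\tfrac{1}{\gamma-1}\rho^\gamma-\tfrac{\gamma}{\gamma-1}R^{\gamma-1}\rho+R^\gamma$ used there, and the test functions $\phi=U$, $\psi=\tfrac12|U|^2$, $\psi=P'(R)$, the use of the admissibility inequality, the elimination of time derivatives via the classical equations for $(R,U)$, and the Gr\"onwall closure all match the paper's argument. The only cosmetic difference is that the paper bounds the quadratic pressure remainder directly by observing it equals $-(\gamma-1)\diverg U$ times the relative pressure potential, rather than invoking a separate coercivity discussion.
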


\begin{proof}
Let us first define for a.e.\ $\tau\in[0,T]$ the \term{relative energy} between $(R,U)$ and $(\rho,u)$ as
\begin{equation*}
E_{rel}(\tau)=\int_{\To^d}\frac{1}{2}{\rho|u-U|^2}+{\frac{1}{\gamma-1}\rho^\gamma-\frac{\gamma}{\gamma-1}R^{\gamma-1}\rho+R^\gamma}dx.
\end{equation*}
Note that, as $\gamma>1$, the map $|\cdot|^\gamma$ is strictly convex, which implies that the relative energy is always non-negative with equality if and only if $\rho\equiv R$ and $u\equiv U$. Thus $E_{rel}(\tau)=0$ for a.e.\ $\tau$ implies Theorem~\ref{Eweak-strong}. 

We set $\phi=U$ in the momentum equation (the second equation of~\eqref{Emass_momentum}) in order to obtain 
\begin{equation}\label{Etest1}
\begin{aligned}
\int_{\T^d}{\rho u}\cdot U(\tau)dx=&\int_{\T^d}\rho^0|u^0|^2dx+\int_0^\tau\int_{\T^d}{\rho u}\cdot\partial_t U+{(\rho u\otimes u)}:\nabla U dxdt\\
&+\int_0^\tau\int_{\T^d}{\rho^\gamma}\diverg U dxdt.
\end{aligned}
\end{equation}
Similarly, putting $\psi=\frac{1}{2}|U|^2$ and then $\psi=\gamma R^{\gamma-1}$ in the first equation of~\eqref{Emass_momentum} gives
\begin{equation}\label{Etest2}
\frac{1}{2}\int_{\T^d}|U(\tau)|^2{\rho}(\tau,x)dx=\int_0^\tau\int_{\T^d}U\cdot\partial_tU {\rho}+\nabla U U\cdot{\rho u}dxdt+\int_{\T^n}\frac{1}{2}|u^0|^2\rho^0dx
\end{equation}
and
\begin{equation}\label{Etest3}
\int_{\T^d}\gamma R^{\gamma-1}(\tau){\rho}(\tau)dx=\int_0^\tau\int_{\T^d}\gamma(\gamma-1)R^{\gamma-2}\partial_tR {\rho}+\gamma(\gamma-1)R^{\gamma-2}\nabla R \cdot{\rho u}dxdt+\int_{\T^d}\gamma (\rho^0)^\gamma dx,
\end{equation}
respectively. 

Next, write the relative energy as
\begin{equation*}
\begin{aligned}
E_{rel}(\tau)&=\int_{\T^d}\frac{1}{2}{\rho|u|^2}+\frac{1}{\gamma-1}{\rho^\gamma}dx + \int_{\T^d}R^\gamma dx+\frac{1}{2}\int_{\T^d}|U|^2{\rho}dx-\int_{\T^d}U\cdot {\rho u}dx-\int_{\T^d}\frac{\gamma}{\gamma-1}R^{\gamma-1}{\rho}dx\\
&=E(\tau)+ \int_{\T^d}R^\gamma dx+\frac{1}{2}\int_{\T^d}|U|^2{\rho}dx-\int_{\T^d}U\cdot{\rho u}dx-\int_{\T^d}\frac{\gamma}{\gamma-1}R^{\gamma-1}\rho dx
\end{aligned}
\end{equation*}
(all integrands evaluated at time $\tau$). Now, using the balances~\eqref{Etest1},~\eqref{Etest2},~\eqref{Etest3} for the last three integrals, we obtain
\begin{equation*}
\begin{aligned}
E_{rel}(\tau)=E(\tau)&+\int_{\T^d}R^\gamma dx\\
&+\int_0^\tau\int_{\T^d}U\cdot\partial_tU \rho+\nabla U U\cdot{\rho u}dxdt+\int_{\T^d}\frac{1}{2}|u^0|^2\rho^0dx\\ 
&-\int_{\T^d}\rho^0|u^0|^2dx-\int_0^\tau\int_{\T^d}{\rho u}\cdot\partial_t U+{(\rho u\otimes u)}:\nabla U dxdt\\
&-\int_0^\tau\int_{\T^d}{\rho^\gamma}\diverg U dxdt\\
&-\int_0^\tau\int_{\T^d}(\gamma R^{\gamma-2}\partial_tR \rho+\gamma R^{\gamma-2}\nabla R \cdot{\rho u})dxdt-\int_{\T^d}\frac{\gamma}{\gamma-1} (\rho^0)^\gamma dx,
\end{aligned}
\end{equation*}
and using~\eqref{Emvsenergy} we see, for a.e.\ $\tau$,
\begin{equation}\label{Eintermediatestep}
\begin{aligned}
E_{rel}(\tau)\leq& -\int_{\T^d}(\rho^0)^\gamma dx +\int_0^\tau\int_{\T^d}R^\gamma dx\\
&+\int_0^\tau\int_{\T^d}U\cdot\partial_tU \rho+\nabla U U\cdot{\rho u}dxdt\\ 
&-\int_0^\tau\int_{\T^d}{\rho u}\cdot\partial_t U+{(\rho u\otimes u)}:\nabla U dxdt\\
&-\int_0^\tau\int_{\T^d}{\rho^\gamma}\diverg U dxdt\\
&-\int_0^\tau\int_{\T^d}(\gamma R^{\gamma-2}\partial_tR \rho+\gamma R^{\gamma-2}\nabla R \cdot {\rho u})dxdt.
\end{aligned}
\end{equation}
Let us collect some terms and write
\begin{equation}\label{Eequality1}
\begin{aligned}
\int_{\T^d}&R^\gamma dx -\int_{\T^d}(\rho^0)^\gamma dx-\int_0^\tau\int_{\T^d}\gamma R^{\gamma-2}\partial_tR \rho dxdt\\
&=\int_0^\tau\int_{\T^d}\frac{d}{dt}R^\gamma-\gamma R^{\gamma-2}\partial_tR \rho dxdt\\
&=\int_0^\tau\int_{\T^d}\gamma R^{\gamma-1}\partial_tR-\gamma R^{\gamma-2}\partial_tR \rho dxdt\\
&=\int_0^\tau\int_{\T^d}\gamma R^{\gamma-2}\partial_tR(R-\rho)dxdt
\end{aligned}
\end{equation}
and
\begin{equation}\label{Eequality3}
\begin{aligned}
\int_0^\tau\int_{\T^d}&U\cdot\partial_tU \rho+\nabla U U\cdot{\rho u}-{\rho u}\cdot\partial_t U-{(\rho u\otimes u)}:\nabla U dxdt\\
&=\int_0^\tau\int_{\T^d}\partial_tU\cdot {\rho (U-u)}+\nabla U :{(\rho u\otimes(U-u))}dxdt.
\end{aligned}
\end{equation}

Insert equalities~\eqref{Eequality1} and~\eqref{Eequality3} into~\eqref{Eintermediatestep} to arrive at
\begin{equation}\label{Eintermediatestep2}
\begin{aligned}
E_{rel}(\tau)\leq&\int_0^\tau\int_{\T^d}\gamma R^{\gamma-2}\partial_tR(R-\rho)dxdt \\
&+\int_0^\tau\int_{\T^d}\partial_tU\cdot {\rho(U-u)}+\nabla U :{(\rho u\otimes(U-u))}dxdt\\
&-\int_0^\tau\int_{\T^d}{\rho^\gamma}\diverg U dxdt-\int_0^\tau\int_{\T^d}\gamma R^{\gamma-2}\nabla R \cdot{\rho u}dxdt.
\end{aligned}
\end{equation}

For the last two integrals, we have, using the divergence theorem,
\begin{equation*}
\begin{aligned}
-\int_0^\tau\int_{\T^d}&{\rho^\gamma}\diverg U dxdt-\int_0^\tau\int_{\T^d}\gamma R^{\gamma-2}\nabla R \cdot{\rho u}dxdt\\
&=\int_0^\tau\int_{\T^d}-{\rho^\gamma}\diverg U +\gamma R^{\gamma-2}\nabla R \cdot(RU-{\rho u})-\gamma R^{\gamma-2}\nabla R\cdot RUdxdt\\
&=\int_0^\tau\int_{\T^d}(R^\gamma-{\rho^\gamma})\diverg U +\gamma R^{\gamma-2}\nabla R \cdot(RU-{\rho u})dxdt.\\
%&=\int_0^\tau\int_{\T^n}-a\overline{(H-h)^2}\diverg U +2a\diverg UH(H-\bar{h}) +\gamma H^{\gamma-2}\nabla H \cdot(HU-\overline{hu})dxdt.
\end{aligned}
\end{equation*}
Plugging this back into~\eqref{Eintermediatestep2} and observing that, by the mass equation for $(R,U)$, 
\begin{equation*}
\begin{aligned}
\gamma &R^{\gamma-2}\partial_tR(R-\rho)+\gamma R^{\gamma-2}\diverg UR(R-\rho) +\gamma R^{\gamma-2}\nabla R \cdot RU
=\gamma R^{\gamma-2}U\cdot\nabla R\rho,
\end{aligned}
\end{equation*}
we obtain
\begin{equation}\label{Eintermediatestep3}
\begin{aligned}
E_{rel}(\tau)\leq&\int_0^\tau\int_{\T^d}\gamma R^{\gamma-2}\cdot\nabla R{\rho(U-u)}dxdt \\
&+\int_0^\tau\int_{\T^d}\partial_tU\cdot {\rho(U-u)}+\nabla U :{(\rho u\otimes(U-u))}dxdt\\
&-\int_0^\tau\int_{\T^d}\gamma R^{\gamma-1}\diverg U(R-\rho )dxdt+\int_0^\tau\int_{\T^d}(R^\gamma-{\rho^\gamma})\diverg U.
\end{aligned}
\end{equation}
The expression in the third line is rewritten as 
\begin{equation}\label{Epointwiseest}
\begin{aligned}
\partial_tU&\cdot {\rho(U-u)}+\nabla U :{(\rho u\otimes(U-u))}\\
&=\partial_tU\cdot {\rho(U-u)}+\nabla U :{(\rho U\otimes(U-u))}+\nabla U :{(\rho(u-U)\otimes(U-u))},
\end{aligned}
\end{equation}
and the integral of the last expression as well as the last line in~\eqref{Eintermediatestep3} can both be estimated by
\begin{equation}\label{Eabsorb}
C\norm{U}_{C^1}\int_0^\tau E_{rel}(t)dt.
\end{equation}
For the other terms in~\eqref{Epointwiseest} we get, invoking the momentum equation for $(R,U)$,
\begin{equation}\label{Emomentum}
\begin{aligned}
\partial_tU&\cdot {\rho(U-u)}+\nabla U :U\otimes{\rho(U-u)}\\
&=\frac{1}{R}(\partial_t(RU)+\diverg(RU\otimes U))\cdot{\rho(U-u)}\\
&= - \gamma R^{\gamma-2}\nabla R\cdot{\rho(U-u)}.
\end{aligned}
\end{equation}
Putting together~\eqref{Eintermediatestep3},~\eqref{Eabsorb}, and~\eqref{Emomentum}, we obtain
\begin{equation*}
E_{rel}(\tau)\leq C\norm{U}_{C^1}\int_0^\tau E_{rel}(t)dt.
\end{equation*}
From Gr\"onwall's inequality it then follows that $E_{rel}(\tau)=0$ for a.e. $t$.
\end{proof}

Let us summarise the strategy we used in both examples:
\begin{enumerate}
\item Define a \term{relative energy} between a weak and a strong solution that is non-negative and that vanishes identically if and only if the two solutions coincide.
\item Using the weak formulation of the equations with suitable expressions of the strong solution as test functions, rewrite the relative energy at time $\tau$ in terms of the initial data and a time integral from $0$ to $\tau$.
\item Cancel the initial terms by virtue of the energy equality for the strong solution and the admissibility criterion for the weak solution.
\item Using the pointwise form of the equations and the $C^1$ bounds for the strong solution, ``absorb" the terms in the time integral into the relative energy.
\item Apply Gr\"onwall's inequality to conclude that the relative energy vanishes identically.       
\end{enumerate}

\section{Dissipative Measure-Valued Solutions}\label{mvs}
The problem with the results from the previous section is that they state weak-strong uniqueness only within a class for which existence is not known. Indeed, the existence of admissible weak solutions of the Euler equations for any initial data is still open (both in the compressible and in the incompressible case). For the application to singular limits as indicated in the introduction, it is therefore desirable to extend the notion of weak solution. We treat here only the incompressible system.

A conceivable approach to obtaining solutions to the Euler equations is to consider the \term{Navier-Stokes equations},
\begin{equation*}
\begin{aligned}
\partial_t u + \diverg(u\otimes u)+\nabla p&=\nu\Delta u,\\
\diverg u&=0,
\end{aligned}
\end{equation*} 
for which the existence of weak solutions is classically known, and to let the viscosity $\nu\searrow 0$. The na\"ive hope is that the corresponding solutions will converge (at least weakly in the sense of distributions) to a weak solution of the Euler equations. However, this convergence might be obstructed by the appearance of \term{oscillations} and \term{concentrations}. For instance, the sequence $u_n(x)=\sin(nx)$ converges weakly to zero, but the sequence of the squares $u_n^2$ does \emph{not} converge to the square of the limit (which is zero). In other words, weak convergence and nonlinearities do not, in general, commute. Let us mention that it is not known whether oscillations or concentrations can actually form in the viscosity sequence. The question is considered extremely difficult.

One way to handle such issues is to consider \term{measure-valued solutions}. We present here the formalism of \cite{feireislgwiazdawsu}, which in turn is inspired by \cite{DST}. Our framework is slightly simpler and more general than the ones commonly used \cite{dipernamajda, alibert}, and arguably represents the most general concept of solution still having the weak-strong uniqueness property (cf.\ the discussion in \cite{feireislgwiazdawsu}). Another advantage is that this framework allows to deal with viscosity terms.

Let $u_\nu$ denote a weak (Leray) solution\footnote{See the next section for a definition.} for the Navier-Stokes equations with viscosity $\nu$. If we identify the vector field $u_\nu$ with the family
\begin{equation*}
\{\delta_{u_\nu}(x,t)\}_{(x,t)\in\T^d\times(0,T)}
\end{equation*}
of (Dirac) probability measures, we can pass to the weak* limit in the space of parametrised probability measures along a subsequence. This limit is then compatible with nonlinearities. The facilitated convergence argument is thus ``traded" for the probabilistic relaxation: Indeed the limit measure may no longer be Dirac, so that the exact value of the velocity field is ignored and only its probability distribution is described.

To make this discussion more rigorous, we define a \emph{Young measure} to be a family 
\begin{equation*}
\{\nu_{x,t}\}_{(x,t)\in\T^d\times(0,T)}
\end{equation*}  
of probability measures on $\R^d$ that is \term{weakly* measurable}, i.e.\ the map
\begin{equation*}
(x,t)\mapsto \int_{\R^d}f(z)d\nu_{x,t}(z)
\end{equation*} 
is Borel measurable for any $f\in C_b(\R^d)$. Then the basic weak convergence statement (which follows basically from the Banach-Alaoglu Theorem) is as follows:

\begin{theorem}[Fundamental Theorem of Young Measures] 
Let $\{u_n\}_{n\in \N}$ be an $L^2$-bounded sequence of maps $\T^d\times (0,T)\to\R^d$. Then there is a subsequence (still denoted $\{u_n\}$) that generates a Young measure $\nu$, in the sense that 
\begin{equation*}
f\circ u_n\rightharpoonup \langle \nu,f\rangle:=\int_{\R^d}f(z)d\nu(z)
\end{equation*}
for any $f\in C(\R^d)$ for which $\{f\circ u_n\}_{n\in\N}$ is equiintegrable. Moreover,
\begin{equation}\label{measurebound}
\int_0^T\int_{\T^d}\langle\nu_{x,t},|\cdot|^2\rangle dxdt<\infty.
\end{equation}
\end{theorem}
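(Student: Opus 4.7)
The plan is to realize each $u_n$ as a Dirac Young measure $\nu^n_{x,t} := \delta_{u_n(x,t)}$, and view the sequence $\{\nu^n\}$ as sitting in the unit ball of the space $Y := \Lrm^\infty_{w^*}(\T^d\times(0,T);\Mcal(\R^d))$. Since $Y$ is the dual of the separable Banach space $X := \Lrm^1(\T^d\times(0,T);C_0(\R^d))$ (via the classical identification $\Lrm^1(X;E)^* = \Lrm^\infty_{w^*}(X;E^*)$ for separable $E$, together with the Riesz representation theorem), the sequential Banach--Alaoglu theorem furnishes a subsequence converging weakly* to some $\nu \in Y$; that is,
\begin{equation*}
\int_0^T\!\!\int_{\T^d} \phi(x,t)\, g(u_n(x,t))\, dx\, dt \tolong \int_0^T\!\!\int_{\T^d} \phi(x,t)\, \langle \nu_{x,t}, g\rangle\, dx\, dt
\end{equation*}
for every $\phi \in \Lrm^1(\T^d\times(0,T))$ and every $g \in C_0(\R^d)$.

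The next step is to verify that $\nu_{x,t}$ is almost everywhere a Borel probability measure. Pairing against nonnegative $\phi$ and $g$ shows that $\nu_{x,t}$ is positive with total mass at most one a.e.; to rule out loss of mass at infinity, I would take cutoffs $g_k \in C_c(\R^d)$ with $0\le g_k \le 1$ and $g_k \equiv 1$ on $B_k$, and use Chebyshev's inequality with the $\Lrm^2$-bound to see $|\{|u_n|>k\}| \le Ck^{-2}$ uniformly in $n$. This uniform smallness is exactly what is needed to exchange the iterated limits in $n$ and $k$, yielding $\langle \nu_{x,t}, 1\rangle = 1$ a.e. For the moment bound~\eqref{measurebound}, I would test against compactly supported continuous $h$ that agree with $|\cdot|^2\wedge R$ on an increasing family of balls: the weak* limit applied to each such $h$ is bounded by $\sup_n\|u_n\|_{\Lrm^2}^2$, and two successive applications of monotone convergence (first enlarging the ball, then sending $R\to\infty$) deliver the claimed bound.

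Finally, to upgrade the generation property from $g \in C_0(\R^d)$ to a general $f \in C(\R^d)$ with $\{f\circ u_n\}$ equiintegrable, I truncate: pick $\eta_R \in C_c(\R^d)$ with $\eta_R \equiv 1$ on $B_R$ and $0 \le \eta_R \le 1$, and set $f_R := f\eta_R \in C_0(\R^d)$. The previous step handles $f_R$ directly. The remainder is controlled via two ingredients used in tandem: Chebyshev combined with the $\Lrm^2$-bound makes $|\{|u_n|>R\}|$ uniformly small in $n$, and equiintegrability of $\{f\circ u_n\}$ then converts this smallness of the offending set into uniform smallness of $\int\int |f(u_n) - f_R(u_n)|\, dx\, dt$ as $R \to \infty$. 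On the limit side, $\langle \nu_{x,t}, f_R\rangle \to \langle \nu_{x,t}, f\rangle$ by dominated convergence, using the second-moment estimate to secure integrability. The main obstacle I anticipate is the careful bookkeeping of these iterated limits, and in particular the verification that $(x,t) \mapsto \langle \nu_{x,t}, f\rangle$ is measurable before its integral is even meaningful; this is ultimately handled by noting that it arises as a pointwise a.e.\ limit of the measurable functions $(x,t) \mapsto \langle \nu_{x,t}, f_R\rangle$.
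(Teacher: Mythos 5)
Your overall route is exactly the one the paper gestures at (it offers no proof, only the remark that the theorem ``follows basically from the Banach--Alaoglu Theorem''): realise the $u_n$ as Dirac measures in the unit ball of $L^\infty_{w^*}(\T^d\times(0,T);\mathcal{M}(\R^d))$, the dual of the separable space $L^1(\T^d\times(0,T);C_0(\R^d))$, extract a weak* limit, restore the lost mass at infinity by the Chebyshev tightness estimate $|\{|u_n|>k\}|\le Ck^{-2}$, obtain the second-moment bound by monotone convergence over truncations, and pass from $C_0$ to general $f$ by cutting off and using equiintegrability to control the tail uniformly in $n$. All of this is correct and is the standard argument.

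There is one step that, as written, would fail in the stated generality. You justify $\langle\nu_{x,t},f\eta_R\rangle\to\langle\nu_{x,t},f\rangle$ ``by dominated convergence, using the second-moment estimate to secure integrability.'' The bound $\langle\nu_{x,t},|\cdot|^2\rangle<\infty$ only dominates $f$ when $|f(z)|\le C(1+|z|^2)$; the theorem, however, allows \emph{any} $f\in C(\R^d)$ for which $\{f\circ u_n\}$ is equiintegrable (e.g.\ $f$ of super-quadratic growth when the $u_n$ happen to be uniformly bounded), and then the second moment gives no control on $\langle\nu_{x,t},|f|\rangle$. The integrability of $f$ against $\nu_{x,t}$ must instead be extracted from the equiintegrability hypothesis itself: for each $M$ one has
\begin{equation*}
\int_0^T\!\!\int_{\T^d}\langle\nu_{x,t},|f|\wedge M\rangle\,dx\,dt
=\lim_n\int_0^T\!\!\int_{\T^d}\bigl(|f(u_n)|\wedge M\bigr)\,dx\,dt
\le\sup_n\|f\circ u_n\|_{L^1},
\end{equation*}
the right-hand side being finite because an equiintegrable family on a finite measure space is bounded in $L^1$; monotone convergence in $M$ then gives $\langle\nu_{x,t},|f|\rangle<\infty$ a.e.\ (the first equality uses that $|f|\wedge M$, after a further harmless cutoff at infinity controlled by Chebyshev, is an admissible $C_0$ test function). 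With this replacement your monotone/dominated convergence passage and the measurability remark go through, and the rest of the proof stands.
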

Applying this to our vanishing viscosity sequence $\{u_n\}$ with associated viscosities $\nu(n)\searrow 0$, we see that the uniform $L^2$ bound on the $u_n$ implies in particular that $\{u_n\}$ itself is equiintegrable, so that $f(z)=z$ is an admissible test function in the Fundamental Theorem. Therefore, we can easily pass to the limit in the linear terms: Indeed, for any $\phi\in C^1(\T^d\times[0,T];\R^d)$,
\begin{equation*}
\int_0^T\int_{\T^d}u_n(x,t)\cdot\partial_t\phi(x,t)dxdt\to\int_0^T\int_{\T^d}\langle\nu_{x,t},\operatorname{id}\rangle\cdot\partial_t\phi(x,t)dxdt,
\end{equation*}  
\begin{equation*}
\nu(n)\int_0^T\int_{\T^d}u_n(x,t)\cdot\Delta\phi(x,t)dxdt\to0,
\end{equation*}  
and, for any $\psi\in C^1(\T^d\times[0,T])$,
\begin{equation*}
\int_0^T\int_{\T^d}u_n(x,t)\cdot\nabla\psi(x,t)dxdt\to\int_0^T\int_{\T^d}\langle\nu_{x,t},\operatorname{id}\rangle\cdot\nabla\psi(x,t)dxdt.
\end{equation*}  
The nonlinear term, however, may still lack compactness due to possible concentrations: For the sequence $\{u_n\otimes u_n\}$ we merely have an $L^\infty((0,T);L^1(\T^d))$ bound, which does not guarantee equiintegrability. Hence the difference
\begin{equation*}
u_n\otimes u_n-\langle\nu,\operatorname{id}\otimes\operatorname{id}\rangle
\end{equation*}  
may not converge to zero. It will, however, converge weakly* to a bounded measure $m$ on $\T^d\times[0,T]$ by virtue of the  $L^\infty((0,T);L^1(\T^d))$ bound and \eqref{measurebound}. By standard measure theory arguments, it is also not difficult to show that the $L^\infty((0,T);L^1(\T^d))$ bound implies the validity of the disintegration
\begin{equation}\label{disint}
m(dxdt)=m_t(dx)\otimes dt
\end{equation}
for some family $\{m_t\}_{t\in(0,T)}$ of uniformly bounded measures on $\R^d$. We thus obtain for any divergence-free $\phi\in C^1(\T^d\times[0,T];\R^d)$ and almost every $\tau\in(0,T)$ the equation

\begin{equation}\label{mvequation}
\begin{aligned}
\int_0^\tau\int_{\T^d}&\partial_t\phi(x,t)\cdot \langle\nu_{x,t},\operatorname{id}\rangle+\nabla\phi(x,t):(\langle\nu_{x,t},\operatorname{id}\otimes\operatorname{id}\rangle+m_t) dxdt\\
&=\int_{\T^d} \langle\nu_{x,\tau},\operatorname{id}\rangle\cdot\phi(x,\tau)-u^0(x)\cdot\phi(x,0)dx,
\end{aligned}
\end{equation}
provided the initial data $u^0$ was kept fixed along the viscosity sequence. Moreover, for any $\psi\in C^1(\T^d)$ we have the divergence condition
\begin{equation}\label{mvdiv}
\int_{\T^d} \langle\nu_{x,t},\operatorname{id}\rangle\cdot\nabla\psi(x)dx=0
\end{equation}
for almost every $t\in(0,T)$.

As a final ingredient in our definition of dissipative measure-valued solutions, we need to impose an analogue of the energy admissibility condition. To this end, define for almost every $\tau\in (0,T)$ the \term{dissipation defect}
\begin{equation*}
D(\tau)=\frac{1}{2}\trace m_\tau(\T^d)
\end{equation*}
and the \term{measure-valued energy} as
\begin{equation*}
E(\tau)=\frac{1}{2}\int_{\T^d}\langle \nu_{x,\tau},|\cdot|^2\rangle dx+D(\tau).
\end{equation*}
First, since $m_\tau$ is uniformly bounded (in the sense of measures) in $\tau$, we find $D\in L^\infty(0,T)$. Second, $D\geq0$, because for any non-negative $\chi\in C([0,T])$ and any $M>0$,
\begin{equation*}
\begin{aligned}
2\int_0^T\chi(\tau)D(\tau)d\tau&=\trace \int_0^T\int_{\T^d}\chi(\tau)dm(x,\tau)\\
&=\lim_{n\to\infty}\int_0^T\int_{T^d}\chi(\tau)\trace[u_n\otimes u_n-\langle\nu,\operatorname{id}\otimes\operatorname{id}\rangle]dxd\tau\\
&=\lim_{n\to\infty}\int_0^T\int_{T^d}\chi(\tau)(|u_n|^2-\langle\nu,|\cdot|^2\rangle dxd\tau\\
&=\lim_{n\to\infty}\left\{\int_0^T\int_{T^d}\chi(\tau)(|u_n|^2\wedge M-\langle\nu,|\cdot|^2\wedge M\rangle dxd\tau\right.\\
&\hspace{1.5cm}+\left.\int_0^T\int_{T^d}\chi(\tau)((|u_n|^2-M)\lor 0-\langle\nu,(|\cdot|^2-M)\lor 0\rangle dxd\tau\right\}\\
&\geq -\int_0^T\int_{T^d}\chi(\tau)\langle\nu,(|\cdot|^2-M)\lor 0\rangle dxd\tau.
\end{aligned}
\end{equation*}
For the last inequality, we used the Fundamental Theorem for the test function $|\cdot|^2\land M\in C_b(\R^d)$. But the remaining term becomes arbitrarily close to zero for sufficiently large $M$ by the dominated convergence theorem (since $(|\cdot|^2-M)\lor 0\leq|\cdot|^2$), whence the claim $D\geq0$ follows.

Third, for our viscosity sequence $\{u_n\}$ we have the energy bound
\begin{equation*}
\frac{1}{2}\int_{\T^d}|u_n(x,\tau)|^2dx\leq \frac{1}{2}\int_{\T^d}|u^0|^2dx
\end{equation*} 
for almost every $\tau\in(0,T)$, and passing to the measure-valued limit thus yields
\begin{equation}\label{mvdiss}
E(\tau)\leq \frac{1}{2}\int_{\T^d}|u^0|^2dx
\end{equation}
for almost every $\tau\in(0,T)$.

We condense our discussion into the following definition:
\begin{definition}
Let $\nu$ be a Young measure, $m$ a matrix-valued measure on $\T^d\times[0,T]$ satisfying \eqref{disint}, and $D\in L^\infty(0,T)$ with $D\geq0$ such that 
\begin{equation*}
|m_t|(\T^d)\leq C D(t)
\end{equation*}
for some constant $C$ and almost every $t\in[0,T]$.

The triple $(\nu, m, D)$ is called a \term{dissipative measure-valued solution} of the Euler equations with initial datum $u^0$ if it satisfies \eqref{mvequation}, \eqref{mvdiv}, and \eqref{mvdiss}. 
\end{definition}
\begin{remark}
Note carefully that our definition does not require the measure-valued solution to arise from a viscosity limit. Indeed, if $u^0$ is kept fixed along the viscosity limit, then there exist dissipative measure-valued solutions (in fact even admissible weak solutions) which can not be recovered as a vanishing viscosity limit. 
\end{remark}

From the previous discussion it is clear that there exists a dissipative measure-valued solution for any $u^0\in L^2(\T^d)$ (just take a viscosity sequence). But we also have weak-strong uniqueness:
\begin{theorem}
Let $(\nu,m,D)$ be a dissipative measure-valued solution and $U\in C^1(\mathbb{T}^d\times[0,T])$ a strong solution of \eqref{euler}, both with initial datum $u^0$. Then $\nu_{x,\tau} =\delta_{U(x,\tau)}$ for almost every $(x,\tau)\in\To^d\times(0,T)$, and $m=0$, $D=0$.
\end{theorem}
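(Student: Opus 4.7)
The plan is to mimic the relative energy argument of Theorem~\ref{basicthm}, now allowing the ``velocity'' to be given only as a parametrised probability distribution and a portion of the $L^2$ mass to have concentrated. The natural candidate for the relative energy is
\begin{equation*}
E_{rel}(\tau):=\frac{1}{2}\int_{\T^d}\langle\nu_{x,\tau},|z-U(x,\tau)|^2\rangle dx+D(\tau),
\end{equation*}
where $z\in\R^d$ is the integration variable for $\nu_{x,\tau}$. This quantity is non-negative, and vanishes precisely when $\nu_{x,\tau}=\delta_{U(x,\tau)}$ for almost every $x$ and $D(\tau)=0$; the structural bound $|m_\tau|(\T^d)\leq CD(\tau)$ then also forces $m_\tau=0$. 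So it suffices to prove $E_{rel}\equiv 0$.

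Expanding the square gives
\begin{equation*}
E_{rel}(\tau)=E(\tau)+\frac{1}{2}\int_{\T^d}|U(x,\tau)|^2 dx-\int_{\T^d}U(x,\tau)\cdot\langle\nu_{x,\tau},\operatorname{id}\rangle dx.
\end{equation*}
The first summand is bounded by $\tfrac12\norm{u^0}_{L^2(\T^d)}^2$ via \eqref{mvdiss}, while the second equals $\tfrac12\norm{u^0}_{L^2(\T^d)}^2$ by the classical energy equality for the smooth $U$. For the cross term I would insert the admissible test function $\phi=U$ (which is $C^1$ and divergence-free) into the measure-valued momentum balance \eqref{mvequation}; the initial contribution $\int_{\T^d}|u^0|^2 dx$ produced there cancels the two energy terms, yielding
\begin{equation*}
E_{rel}(\tau)\leq-\int_0^\tau\!\int_{\T^d}\bigl[\partial_t U\cdot\langle\nu,\operatorname{id}\rangle+\nabla U:\langle\nu,\operatorname{id}\otimes\operatorname{id}\rangle\bigr]dxdt-\int_0^\tau\!\int_{\T^d}\nabla U:dm_t\,dt.
\end{equation*}

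Next I would substitute $\partial_t U=-(U\cdot\nabla)U-\nabla p$ into the first integrand, discarding the pressure term thanks to the weak divergence-freeness \eqref{mvdiv} of $\langle\nu,\operatorname{id}\rangle$. Rearranging exactly as in Theorem~\ref{basicthm} and employing the tensorial decomposition $\langle\nu,z\otimes z\rangle=\langle\nu,(z-U)\otimes(z-U)\rangle+U\otimes\langle\nu,\operatorname{id}\rangle+\langle\nu,\operatorname{id}\rangle\otimes U-U\otimes U$ together with $\diverg U=0$ and \eqref{mvdiv} to eliminate the total-gradient contributions, one is left with
\begin{equation*}
E_{rel}(\tau)\leq-\int_0^\tau\!\int_{\T^d}\langle\nu_{x,t},(z-U)\cdot\nabla_{sym}U(z-U)\rangle dxdt-\int_0^\tau\!\int_{\T^d}\nabla U:dm_t\,dt.
\end{equation*}
The first term is dominated by $\int_0^\tau 2\norm{\nabla_{sym}U(t)}_{L^\infty(\T^d)}\cdot\tfrac12\int_{\T^d}\langle\nu_{x,t},|z-U|^2\rangle dx\,dt$, and the second by $\int_0^\tau\norm{\nabla U(t)}_{L^\infty(\T^d)}|m_t|(\T^d)dt\leq C\int_0^\tau\norm{\nabla U(t)}_{L^\infty(\T^d)}D(t)dt$. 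Both are controlled by a multiple of $\int_0^\tau\norm{U(t)}_{C^1(\T^d)}E_{rel}(t)dt$, whence Gr\"onwall's inequality delivers $E_{rel}\equiv 0$, completing the proof.

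The main subtlety is the bookkeeping around the concentration measure $m$: the extra integral $\int\nabla U:dm_t$ in the weak formulation must be reabsorbed into the Gr\"onwall quantity, and this is precisely the purpose of including the defect $D$ in the definition of $E_{rel}$ together with the compatibility bound $|m_t|(\T^d)\leq CD(t)$ built into the notion of dissipative measure-valued solution. Everything else is a direct transcription of the calculation from Theorem~\ref{basicthm} into the Young-measure formalism.
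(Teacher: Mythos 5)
Your proposal is correct and follows essentially the same route as the paper: the same relative energy $\frac12\int\langle\nu_{x,\tau},|\operatorname{id}-U|^2\rangle dx+D(\tau)$, testing \eqref{mvequation} with $U$, cancelling the initial terms via \eqref{mvdiss} and the energy equality for $U$, absorbing the concentration term through $|m_t|(\T^d)\leq CD(t)$, and concluding with Gr\"onwall. The only cosmetic difference is that the paper also symmetrises $\nabla U$ in the $m$-integral (legitimate since $m$ is a limit of symmetric tensors), which changes nothing in the estimate.
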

\begin{proof}
The proof is achieved essentially by rewriting the proof of Theorem \ref{basicthm} in measure-valued notation. Specifically, we estimate for almost every $\tau\in(0,T)$ the \term{relative energy} $E_{rel}$ as 
%\begin{equation}
\begin{align*}
E_{rel}(\tau)&:= \frac{1}{2}\int_{\To^d}\langle \nu_{x,\tau},|\operatorname{id}-U(x,\tau)|^2\rangle dx+D(\tau)\\
&=\frac{1}{2}\int_{\To^3}|U(x,\tau)|^2dx+\frac{1}{2}\int_{\To^d}\langle\nu_{x,\tau},|\cdot|^2\rangle dx+D(\tau)-\int_{\To^d}\langle \nu_{x,\tau},\operatorname{id}\rangle\cdot U(x,\tau)dx\\
&\leq \frac{1}{2}\int_{\To^3}|u^0(x)|^2dx+\frac{1}{2}\int_{\To^d}|u^0(x)|^2dx-\int_{\To^d}\langle \nu_{x,\tau},\operatorname{id}\rangle\cdot U(x,\tau)dx\\
&=\int_{\To^3}|u^0(x)|^2dx-\int_{\To^3}u^0(x)\cdot u^0(x)dx\\
&\hspace{1cm}-\int_0^\tau\int_{\To^d}[\partial_tU(x,t)\cdot\langle \nu_{x,t},\operatorname{id}\rangle+\nabla U(x,t):\langle \nu_{x,t},\operatorname{id}\otimes\operatorname{id}\rangle]dxdt\\
&\hspace{1cm}-\int_0^\tau\int_{\To^d}\nabla U(x,t)dm(x,t)\\
&=-\int_0^\tau\int_{\To^d}[\partial_tU(x,t)\cdot\langle \nu_{x,t},\operatorname{id}\rangle+\nabla U(x,t):\langle \nu_{x,t},\operatorname{id}\otimes\operatorname{id}\rangle]dxdt\\
&\hspace{1cm}-\int_0^\tau\int_{\To^d}\nabla_{sym} U(x,t)dm(x,t)\\
&=\int_0^\tau\int_{\To^d}[\diverg(U\otimes U)(x,t)\cdot\langle \nu_{x,t},\operatorname{id}\rangle-\nabla U(x,t)\langle \nu_{x,t},\operatorname{id}\otimes\operatorname{id}\rangle]dxdt\\
&\hspace{1cm}-\int_0^\tau\int_{\To^d}\nabla_{sym} U(x,t)dm(x,t)\\
%\displaybreak
&=\int_0^\tau\int_{\To^d}\langle\nu_{x,t},(U(x,t)-\operatorname{id})\cdot\nabla_{sym} U(x,t)(\operatorname{id}-U(x,t))\rangle dxdt\\
&\hspace{1cm}-\int_0^\tau\int_{\To^d}\nabla_{sym} U(x,t)dm(x,t)\\
&\leq \int_0^\tau\norm{\nabla_{sym}U(t)}_{L^\infty(\To^d)}\left[\int_{\To^d}\langle\nu_{x,t},|\operatorname{id}-U(x,t)|^2\rangle dx+|m_t|(\T^d)\right]dt\\
&\leq C\int_0^\tau\norm{\nabla_{sym}U(t)}_{L^\infty(\To^d)}E_{rel}(t)dt,
\end{align*}
%\end{equation}
and from Gr\"onwall's inequality it follows that $E_{rel}(\tau)=0$ almost everywhere. But since both terms in the relative energy are non-negative, this implies indeed $\nu_{x,\tau}=\delta_{U(x,\tau)}$ and $D(\tau)=0$ almost everywhere, and $m=0$ follows from $|m_\tau|(\T^d)\leq CD(\tau)$ for almost every $\tau$. 
\end{proof}

\section{Dealing with Viscosity}\label{viscosity}
\subsection{Incompressible Navier-Stokes Equations}
For the incompressible Navier-Stokes equations
\begin{equation}\label{navierstokes}
\begin{aligned}
\partial_t u + \diverg(u\otimes u)+\nabla p&=\nu\Delta u,\\
\diverg u&=0,
\end{aligned}
\end{equation}
the improved regularity thanks to the diffusion term enables us to show a much better weak-strong uniqueness result than the one for Euler (Theorem \ref{basicthm}). ``Better" means that the ``strong" solution is allowed to be much less regular that Lipschitz. In our discussion of the incompressible Navier-Stokes equations, we always assume $d=2$ or $d=3$, as Sobolev embeddings come into play which are no longer valid in higher dimensions. 

For the Navier-Stokes equations, we can afford to work in the more realistic setting of bounded domains. Indeed, the no-slip boundary conditions
\begin{equation*}
u\cdot n=0\hspace{0.3cm}\text{on $\partial\Omega$}
\end{equation*}
that are most commonly imposed on the equations do not conflict with weak-strong uniqueness. The situation for the Euler equations is vastly different, see Section \ref{boundary} below. In what follows, let $\Omega\subset \R^d$ be a bounded Lipschitz domain. 

Before we state the result, let us discuss some preliminaries:
\begin{definition}[\cite{leray}]
A vectorfield $u\in L^\infty((0,T);L^2(\Omega))\cap L^2((0,T);H_0^1(\Omega))$ is called a \term{Leray solution} of \eqref{navierstokes} with inital datum $u^0$ if 
\begin{equation*}
\begin{aligned}
&\int_0^\tau\int_{\Omega}\partial_t\phi(x,t)\cdot u(x,t)-\phi(x,t)\cdot (u(x,t)\cdot\nabla u(x,t))-\nu\nabla\phi(x,t):\nabla u(x,t)dxdt\\
&\hspace{3cm}=\int_{\Omega}u(x,\tau)\cdot\phi(x,\tau)-u^0(x)\cdot\phi(x,0)dx
\end{aligned}
\end{equation*}
for almost every $\tau\in(0,T)$ and every $\phi\in C_c^1(\Omega\times[0,T];\R^d)$ with $\diverg\phi=0$,
\begin{equation*}
\int_{\Omega}u(x,\tau)\cdot\nabla \psi(x)dx=0
\end{equation*}
for almost every $\tau\in(0,T)$ and every $\psi\in C^1(\Omega)$, and
\begin{equation*}
\frac{1}{2}\int_{\Omega}|u(x,\tau)|^2dx+\nu\int_0^\tau\int_{\Omega}|\nabla u(x,t)|^2dxdt\leq\frac{1}{2}\int_{\Omega}|u^0(x)|^2dx
\end{equation*}
for almost every $\tau\in(0,T)$.
\end{definition} 
We will use the following inequality, which can easily be verified via H\"older and Sobolev inequalities: Let $r,s$ be such that
\begin{equation*}
\frac{d}{s}+\frac{2}{r}=1,\hspace{0.4cm}d< s<\infty,
\end{equation*} 
 and let $u,U\in  L^\infty((0,T);L^2(\Omega))\cap L^2((0,T);H_0^1(\Omega))$ be divergence-free. Assume in addtion $U\in L^r((0,T);L^s(\Omega))$.
Then, for every $\tau\in(0,T)$,
\begin{equation*}
\begin{aligned}
&\left|\int_0^\tau\int_{\Omega}\nabla(u-U):((u-U)\otimes U)dxdt\right|\\
&\hspace{0.5cm}\leq C\left(\int_0^\tau\int_{\Omega}|\nabla(u-U)|^2dxdt\right)^{1-1/r}\left(\int_0^\tau\norm{U}_{L_x^s}^r\int_{\Omega}|u-U|^2dxdt\right)^{1/r}.
\end{aligned}
\end{equation*}

The classical weak-strong uniqueness theorem of Prodi \cite{prodi} and Serrin \cite{serrin} reads:
\begin{theorem}
Let $d\in\{2,3\}$, $r,s$ as above and $u$, $U$ be two Leray solutions with the same initial datum $u^0$. If $U\in L^r((0,T);L^s(\T^d))$, then $u=U$ for almost every $(x,\tau)\in\T^d\times(0,T)$. 
\end{theorem}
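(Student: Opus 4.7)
The plan is to extend the relative energy argument of Theorem~\ref{basicthm} to the viscous setting, where the diffusion term on the left provides the room needed to handle the comparatively weak regularity of $U$. Setting $w := u - U$ and $E_{rel}(\tau) := \tfrac12\|w(\tau)\|_{L^2(\Omega)}^2$, I would expand $|w|^2 = |u|^2 + |U|^2 - 2\,u\cdot U$ and invoke the Leray energy inequality for \emph{both} $u$ and $U$. What remains to be understood is the cross term $\int_\Omega u(\tau)\cdot U(\tau)\,dx$, which should be rewritten as an initial datum plus a spacetime integral using the equations themselves.

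To do so, I would test the weak formulation for $u$ against $\phi = U$ and the weak formulation for $U$ against $\phi = u$, then add the resulting identities. Because $\partial_t(u\cdot U) = \partial_t u\cdot U + u\cdot\partial_t U$, the temporal contributions telescope to one boundary term, and an integration by parts using $\diverg u = \diverg U = 0$ combines the two nonlinear terms into $-\int_0^\tau\int_\Omega (w\cdot\nabla U)\cdot w\,dxdt$ (the extra piece $\int_\Omega U\cdot(w\cdot\nabla U)\,dx = \tfrac12\int_\Omega w\cdot\nabla|U|^2\,dx$ vanishes by incompressibility of $w$). Assembling the viscous contributions into $-\nu\int_0^\tau\|\nabla w\|_{L^2}^2\,dt$, the relative energy satisfies
\begin{equation*}
E_{rel}(\tau) + \nu\int_0^\tau\|\nabla w(t)\|_{L^2}^2\,dt \leq \Bigl|\int_0^\tau\int_\Omega \nabla w : (w\otimes U)\,dxdt\Bigr|,
\end{equation*}
the right-hand side being equivalent after one more integration by parts. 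To this one applies the Ladyzhenskaya--Prodi--Serrin inequality stated just above the theorem, and then Young's inequality with exponents $(r/(r-1),r)$, arriving at the bound $\tfrac{\nu}{2}\int_0^\tau\|\nabla w\|_{L^2}^2\,dt + C(\nu)\int_0^\tau\|U(t)\|_{L^s}^r\,E_{rel}(t)\,dt$. Absorbing the viscous term into the left-hand side and applying Gr\"onwall's lemma (noting that $\|U(\cdot)\|_{L^s}^r \in L^1(0,T)$ and $E_{rel}(0) = 0$) forces $E_{rel} \equiv 0$, whence $u = U$ almost everywhere.

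The technical obstacle I expect to be most delicate is the rigorous justification of testing the weak formulation of one Leray solution against the other, since the definition formally admits only test functions in $C_c^1(\Omega\times[0,T];\R^d)$ with $\diverg\phi = 0$. The standard remedy is a double mollification in space and time, combined with the Helmholtz--Leray projection to preserve the solenoidal constraint and the no-slip boundary condition; passage to the limit is routine for the linear and viscous terms, but the nonlinear cross-term is precisely where the hypothesis $U \in L^r_t L^s_x$ with $2/r + d/s = 1$ becomes indispensable. The scaling is sharp in the sense that the endpoint Sobolev embedding $H^1 \hookrightarrow L^{2d/(d-2)}$ (for $d = 3$; any $L^p$ with $p<\infty$ if $d = 2$) is exactly matched by the scaling of the Ladyzhenskaya--Prodi--Serrin inequality. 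I would defer the detailed mollification argument to the standard references cited in the Introduction.
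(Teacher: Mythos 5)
Your proposal is correct and follows essentially the same route as the paper: expand the relative energy, invoke the Leray energy inequality for both solutions, test each weak formulation against the other to handle the cross term, reduce the nonlinearity to $\int_0^\tau\int\nabla(u-U):((u-U)\otimes U)$, apply the Ladyzhenskaya--Prodi--Serrin interpolation inequality followed by Young's inequality, absorb the gradient term into the viscous dissipation, and conclude by Gr\"onwall. The paper likewise treats the argument as a sketch under a smoothness assumption and defers the approximation/mollification issues you flag to Chapter~4 of the Galdi reference.
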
 
\begin{proof}Assume for the sake of this proof sketch that $u$ and $v$ are smooth; the general case can then be handled by (nontrivial) approximation arguments detailed e.g.\ in Chapter 4 of \cite{galdinotes}.

We estimate for almost every $\tau\in(0,T)$
\begin{equation*}
\begin{aligned}
&\frac{1}{2}\int_{\Omega}|u-U|(\tau)^2dx+\nu\int_0^\tau\int_{\Omega}|\nabla(u-U)|^2dxdt\\
&= \frac{1}{2}\int_{\Omega}|u(\tau)|^2dx+\frac{1}{2}\int_{\Omega}|U(\tau)|^2dx+\nu\int_0^\tau\int_{\Omega}|\nabla u|^2dxdt+\nu\int_0^\tau\int_{\Omega}|\nabla U|^2dxdt\\
&\hspace{1cm}-\int_{\Omega}u\cdot U(\tau)dx-2\nu\int_0^\tau\int_{\Omega}\nabla u:\nabla Udxdt\\
&\leq \int_{\Omega}|u^0|^2dx-\int_{\Omega}u\cdot U(\tau)dx-2\nu\int_0^\tau\int_{\Omega}\nabla u:\nabla Udxdt\\
&=\int_0^\tau\int_{\Omega}-\partial_tU\cdot u+U\cdot(u\cdot\nabla)u+\nu\nabla U:\nabla u dxdt-2\nu\int_0^\tau\int_{\Omega}\nabla u:\nabla Udxdt\\
&=\int_0^\tau\int_{\Omega}-\partial_tU\cdot u+U\cdot(u\cdot\nabla)u-\nu\nabla U:\nabla u dxdt\\
&=\int_0^\tau\int_{\Omega}U\cdot\partial_t u+U\cdot(u\cdot\nabla)u-\nu\nabla U:\nabla u dxdt+\int_{\Omega}|u^0|^2dx-\int_{\Omega}u\cdot U(\tau)dx\\
&=\int_0^\tau\int_{\Omega}-\nabla u:(U\otimes U)+U\cdot(u\cdot\nabla)udxdt\\
&=\int_0^\tau\int_{\Omega}\nabla u:((u-U)\otimes U)dxdt\\
&=\int_0^\tau\int_{\Omega}\nabla (u-U):((u-U)\otimes U)dxdt\\
&\leq C\left(\int_0^\tau\int_{\Omega}|\nabla(u-U)|^2dxdt\right)^{1-1/r}\left(\int_0^\tau\norm{U}_{L_x^s}^r\int_{\Omega}|u-U|^2dxdt\right)^{1/r}\\
&\leq \nu \int_0^\tau\int_{\Omega}|\nabla(u-U)|^2dxdt+C(\nu)\int_0^\tau\norm{U}_{L_x^s}^r\int_{\Omega}|u-U|^2dxdt,
\end{aligned}
\end{equation*}
where in the last step we used Young's inequality 
\begin{equation*}
ab\leq\frac{\delta a^p}{p}+\frac{b^q}{\delta^{q/p} q}
\end{equation*}
for any $\delta>0$, $1/p+1/q=1$, and $a,b\geq0$. 

Since the terms with factor $\nu$ vanish on both sides, we can use Gr\"onwall's inequality to conclude.
\end{proof}

\begin{remark}
The statement is also true in the endpoint cases $s\in\{d,\infty\}$, with slightly modified proofs. See again \cite{galdinotes}, Chapter 4, for details.
\end{remark}

It is worthwhile noting the fundamental differences between this proof and the one for the incompressible Euler equations: For Navier-Stokes, we use both $u$ and $U$ as the test field in the definition of weak solution of the respective other function (cf.\ lines 3 and 7 in the above estimate); and, most importantly, the compensation of the (insufficient) exponent $1/r$ by the viscosity term in the last step relies crucially on $\nu>0$ (indeed, the constant $C(\nu)$ blows up as $\nu\searrow 0$).

\subsection{Compressible Navier-Stokes Equations}
In the case of the incompressible Navier-Stokes equations, the Laplacian helped us achieve a much better weak-strong uniqueness result than for the inviscid (Euler) system. The situation is much different for the \term{isentropic compressible Navier-Stokes equations}, at least as far as we currently know. Here, the viscosity term presents an additional difficulty that needs to be overcome to show weak-strong uniqueness at the same regularity as for the compressible Euler system. It is an interesting open question whether the result can be improved by a more clever treatment of the viscous term.

The equations read 
\begin{equation}\label{isentropicNS}
\begin{aligned}
\partial_t (\rho u) + \diverg(\rho u\otimes u)+\nabla \rho^\gamma&=\diverg\mathbb{S}(\nabla u),\\
\partial_t\rho+\diverg (\rho u)&=0,
\end{aligned}
\end{equation}
where 
\begin{equation*}
\mathbb{S}(\nabla u):=\mu\left(\nabla u+\nabla^t u-\frac{2}{3}(\diverg u) I\right)+\eta(\diverg u) I
\end{equation*}
is the \term{Newtonian viscous stress} for some given parameters $\mu>0$ and $\eta\geq0$.  

Similarly as before, a pair $(\rho,u)$ is a \term{weak solution} of the isentropic Navier-Stokes system with initial datum $(\rho^0,u^0)$ if $\rho\in L^\gamma(\T^d\times(0,T))$, $\rho|u|^2\in L^1(\T^d\times (0,T))$, $u\in L^2(0,T;H_0^1(\T^d))$, and

\begin{equation}\label{NSmass_momentum}
\begin{aligned}
\int_0^\tau\int_{\Omega}\partial_t\psi \rho+\nabla\psi\cdot{\rho u}dxdt+\int_{\Omega}\psi(x,0)\rho^0-\psi(x,\tau)\rho(x,\tau)dx&=0,\\
\int_0^\tau\int_{\Omega}\partial_t\phi\cdot{\rho u}+\nabla\phi : {(\rho u\otimes u)}+\diverg\phi{\rho^\gamma}-\mathbb{S}(\nabla u):\nabla \phi dxdt\\
+\int_{\Omega}\phi(x,0)\cdot \rho^0u^0-\phi(x,\tau)\cdot{\rho u}(x,\tau)dx&=0
\end{aligned}
\end{equation}
for almost every $\tau\in(0,T)$, every $\psi\in C^1(\Omega\times[0,T])$, and every $\phi\in C_c^1(\Omega\times[0,T];\R^d)$. Again, we can impose a weak energy inequality: A solution is \term{globally admissible} if for almost every $\tau\in(0,T)$
\begin{equation}\label{NSmvsenergy}
E(\tau)+\int_0^\tau\int_{\Omega}\mathbb{S}(\nabla u):\nabla u dxdt\leq E^0,
\end{equation}
where the energy $E$ is defined as in the inviscid situation, i.e.
\begin{equation*}
E(\tau)=\frac{1}{2}\int_{\Omega}\rho(x,\tau)|u(x,\tau)|^2dx+\frac{1}{\gamma-1}\rho(x,\tau)^\gamma dx,
\end{equation*}
and
\begin{equation*}
E^0=\frac{1}{2}\int_{\Omega}\rho^0(x)|u^0(x)|^2dx+\frac{1}{\gamma-1}\rho^0(x)^\gamma dx.
\end{equation*}

\begin{theorem}
Let $(\rho,u)$ be a globally admissible weak solution of \eqref{isentropicNS} and $(R,U)$ a strong solution such that
\begin{equation*}
R\in C^1(\Omega\times[0,T]),\hspace{0.3cm}R>0,\hspace{0.3cm}U\in C^1([0,T];C^2(\Omega)),\hspace{0.3cm}U\cdot n=0\hspace{0.2cm}\text{on $\partial\Omega$}.
\end{equation*}
If both solutions share the same initial data, then they coincide for almost every $(x,t)\in\Omega\times(0,T)$.
\end{theorem}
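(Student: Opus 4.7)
The plan is to extend the relative-energy argument of Theorem \ref{Eweak-strong} to the viscous setting. I would work with exactly the same relative energy
$$E_{rel}(\tau)=\int_{\Omega}\frac{1}{2}\rho|u-U|^2+\frac{1}{\gamma-1}\rho^\gamma-\frac{\gamma}{\gamma-1}R^{\gamma-1}\rho+R^\gamma\,dx,$$
which by strict convexity of $t\mapsto t^\gamma$ for $\gamma>1$ is non-negative and vanishes iff $(\rho,u)\equiv(R,U)$. The five-step recipe from the end of Section \ref{relenergy} is then applied: invoke the admissibility inequality \eqref{NSmvsenergy} for $(\rho,u)$, which now supplies the additional summand $-\int_0^\tau\int_\Omega\mathbb{S}(\nabla u):\nabla u\,dxdt$ on the right-hand side of the bound for $E_{rel}$; test the weak momentum equation \eqref{NSmass_momentum} with $\phi=U$ (which produces the additional $+\int_0^\tau\int_\Omega\mathbb{S}(\nabla u):\nabla U\,dxdt$) and the mass equation with $\psi=\tfrac{1}{2}|U|^2$ and $\psi=\tfrac{\gamma}{\gamma-1}R^{\gamma-1}$; and insert the pointwise strong equations for $(R,U)$, which now carry the extra term $\frac{1}{R}\diverg\mathbb{S}(\nabla U)$ in the momentum equation and thereby produce a residual contribution $+\int_0^\tau\int_\Omega\frac{1}{R}\diverg\mathbb{S}(\nabla U)\cdot\rho(U-u)\,dxdt$.

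The key observation is that the ``mean-field'' part of this residual, obtained by splitting $\rho(U-u)=R(U-u)+(\rho-R)(U-u)$, combines with the two preceding viscous contributions, by virtue of the linearity and symmetry $\mathbb{S}(A):B=A:\mathbb{S}(B)$, into $-\int_0^\tau\int_\Omega\mathbb{S}(\nabla(u-U)):\nabla(u-U)\,dxdt\leq 0$. The integration by parts needed here relies on $u\in H^1_0$ and $U\cdot n=0$ on $\partial\Omega$, with any tangential boundary contribution involving only the regular quantities $U$ and $\mathbb{S}(\nabla U)$. This non-positive term is transferred to the left-hand side, yielding (via Korn's inequality) $L^2$-coercive control on $\nabla(u-U)$. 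The non-viscous remainders are handled exactly as in the proof of Theorem \ref{Eweak-strong}, bounded by $C(\|R\|_{C^1}+\|U\|_{C^1_tC^2_x}, c)\int_0^\tau E_{rel}(t)\,dt$.

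The main obstacle is the fluctuation term $\int_0^\tau\int_\Omega\frac{1}{R}\diverg\mathbb{S}(\nabla U)\cdot(\rho-R)(U-u)\,dxdt$ left over from the splitting: a naive Cauchy--Schwarz estimate against mass conservation only yields an $O(\sqrt{\tau\int_0^\tau E_{rel}})$ bound, which is \emph{not} Grönwall-closable. Following the device of \cite{FeJiNo}, the resolution is to partition $\Omega$ according to whether $|\rho-R|$ is small or large: on the bulk set where $\rho\sim R$, the pressure-convexity estimate $(\rho-R)^2\lesssim \frac{1}{\gamma-1}\rho^\gamma-\frac{\gamma}{\gamma-1}R^{\gamma-1}\rho+R^\gamma$ together with the lower bound $\rho\gtrsim c$ allow absorption of the integrand into the kinetic part of $E_{rel}$; on the near-vacuum set, the bound $|\rho-R|^\gamma\lesssim \frac{1}{\gamma-1}\rho^\gamma-\frac{\gamma}{\gamma-1}R^{\gamma-1}\rho+R^\gamma$ combines with the viscous dissipation already transferred to the left (providing $L^2$ control of $u-U$ after Poincaré) to close the estimate. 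A secondary technical point is that $U\notin C_c^1(\Omega\times[0,T];\R^d)$; this is handled by a standard density approximation respecting $U\cdot n=0$. With these pieces in place, Grönwall's inequality delivers $E_{rel}\equiv 0$, and therefore $(\rho,u)=(R,U)$ almost everywhere on $\Omega\times(0,T)$.
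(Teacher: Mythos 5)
Your proposal follows the paper's proof almost step for step: same relative energy, same choice of test functions $\phi=U$, $\psi=\tfrac12|U|^2$, $\psi=\gamma R^{\gamma-1}$, the same reorganisation of the viscous residual into $\int_0^\tau\int_\Omega(\mathbb{S}(\nabla u)-\mathbb{S}(\nabla U)):(\nabla u-\nabla U)\,dxdt$ plus the fluctuation term $\int_0^\tau\int_\Omega\frac{1}{R}\diverg\mathbb{S}(\nabla U)\cdot(\rho-R)(U-u)\,dxdt$, and the same strategy of splitting the spatial domain by the size of the density. Your remark that $U$ is not an admissible test function in \eqref{NSmass_momentum} because it is not compactly supported is a fair point that the paper itself glosses over.

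There is, however, one concrete omission. You partition $\Omega$ into a bulk set where $\rho\sim R$ and a near-vacuum set, but the paper needs \emph{three} regions: $\{\rho\leq\underline{R}/2\}$, $\{\underline{R}/2<\rho<2\overline{R}\}$, and $\{\rho\geq 2\overline{R}\}$, where $\underline{R}$ and $\overline{R}$ are the (positive, finite) infimum and supremum of $R$. On the high-density set $\{\rho\geq 2\overline{R}\}$ neither of your two estimates applies: the set is not near-vacuum, and for the physically relevant range $1<\gamma<2$ the quadratic quantity $(\rho-R)^2\sim\rho^2$ is \emph{not} dominated by the pressure part $\frac{1}{\gamma-1}\rho^\gamma-\frac{\gamma}{\gamma-1}R^{\gamma-1}\rho+R^\gamma\sim\rho^\gamma$, so the Cauchy--Schwarz absorption fails there. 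The paper's fix is the linear (rather than quadratic) bound $\rho\leq C\bigl(\frac{1}{\gamma-1}\rho^\gamma-\frac{\gamma}{\gamma-1}R^{\gamma-1}\rho+R^\gamma\bigr)$ on that set, combined with
\begin{equation*}
|\rho-R|\,|U-u|\leq \rho|U-u|\leq \tfrac12\rho+\tfrac12\rho|U-u|^2,
\end{equation*}
both summands being controlled by the relative energy. Without this third case your Gr\"onwall loop does not close for $\gamma<2$; with it, the argument is complete and coincides with the paper's.
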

\begin{proof}
Large parts of the proof are very similar to the case of the isentropic Euler equations, see Section \ref{relenergy}. We give the full argument nevertheless for the reader's convenience.

Agein we define for a.e.\ $\tau\in[0,T]$ the relative energy between $(R,U)$ and $(\rho,u)$ as
\begin{equation*}
E_{rel}(\tau)=\int_{\Omega}\frac{1}{2}{\rho|u-U|^2}+{\frac{1}{\gamma-1}\rho^\gamma-\frac{\gamma}{\gamma-1}R^{\gamma-1}\rho+R^\gamma}dx.
\end{equation*}
Note once more that, since $\gamma>1$, it holds that $E_{rel}(\tau)=0$ for a.e.\ $\tau$ implies Theorem~\ref{Eweak-strong}. 

Setting $\phi=U$ in the momentum equation gives 
\begin{equation}\label{NStest1}
\begin{aligned}
\int_{\Omega}{\rho u}\cdot U(\tau)dx=&\int_{\Omega}\rho^0|u^0|^2dx+\int_0^\tau\int_{\Omega}{\rho u}\cdot\partial_t U+{(\rho u\otimes u)}:\nabla U dxdt\\
&+\int_0^\tau\int_{\Omega}{\rho^\gamma}\diverg U dxdt-\int_0^\tau\int_{\Omega}\mathbb{S}(\nabla u):\nabla Udxdt.
\end{aligned}
\end{equation}
Then, setting $\psi=\frac{1}{2}|U|^2$ and $\psi=\gamma R^{\gamma-1}$ in the first equation of~\eqref{Emass_momentum} yields
\begin{equation}\label{NStest2}
\frac{1}{2}\int_{\Omega}|U(\tau)|^2{\rho}(\tau,x)dx=\int_0^\tau\int_{\Omega}U\cdot\partial_tU {\rho}+\nabla U U\cdot{\rho u}dxdt+\int_{\Omega}\frac{1}{2}|u^0|^2\rho^0dx
\end{equation}
and
\begin{equation}\label{NStest3}
\int_{\Omega}\gamma R^{\gamma-1}(\tau){\rho}(\tau)dx=\int_0^\tau\int_{\Omega}\gamma(\gamma-1)R^{\gamma-2}\partial_tR {\rho}+\gamma(\gamma-1)R^{\gamma-2}\nabla R \cdot{\rho u}dxdt+\int_{\Omega}\gamma (\rho^0)^\gamma dx,
\end{equation}
respectively.

We write the relative energy as
\begin{equation*}
\begin{aligned}
E_{rel}(\tau)&=\int_{\Omega}\frac{1}{2}{\rho|u|^2}+\frac{1}{\gamma-1}{\rho^\gamma}dx + \int_{\Omega}R^\gamma dx+\frac{1}{2}\int_{\Omega}|U|^2{\rho}dx-\int_{\Omega}U\cdot {\rho u}dx-\int_{\Omega}\frac{\gamma}{\gamma-1}R^{\gamma-1}{\rho}dx\\
&=E(\tau)+ \int_{\Omega}R^\gamma dx+\frac{1}{2}\int_{\Omega}|U|^2{\rho}dx-\int_{\Omega}U\cdot{\rho u}dx-\int_{\Omega}\frac{\gamma}{\gamma-1}R^{\gamma-1}\rho dx
\end{aligned}
\end{equation*}
Next, using the balances~\eqref{NStest1},~\eqref{NStest2},~\eqref{NStest3} for the last three integrals, we have
\begin{equation*}
\begin{aligned}
E_{rel}(\tau)=E(\tau)&+\int_{\Omega}R^\gamma dx\\
&+\int_0^\tau\int_{\Omega}U\cdot\partial_tU \rho+\nabla U U\cdot{\rho u}dxdt+\int_{\Omega}\frac{1}{2}|u^0|^2\rho^0dx\\ 
&-\int_{\Omega}\rho^0|u^0|^2dx-\int_0^\tau\int_{\Omega}{\rho u}\cdot\partial_t U+{(\rho u\otimes u)}:\nabla U dxdt\\
&-\int_0^\tau\int_{\Omega}{\rho^\gamma}\diverg U dxdt+\int_0^\tau\int_{\Omega}\mathbb{S}(\nabla u):\nabla Udxdt\\
&-\int_0^\tau\int_{\Omega}(\gamma R^{\gamma-2}\partial_tR \rho+\gamma R^{\gamma-2}\nabla R \cdot{\rho u})dxdt-\int_{\Omega}\frac{\gamma}{\gamma-1} (\rho^0)^\gamma dx,
\end{aligned}
\end{equation*}
and using~\eqref{NSmvsenergy} we have, for a.e.\ $\tau$,
\begin{equation}\label{NSintermediatestep}
\begin{aligned}
E_{rel}(\tau)+&\int_0^\tau\int_{\Omega}\mathbb{S}(\nabla u):(\nabla u-\nabla U)dxdt\\
\leq& -\int_{\Omega}(\rho^0)^\gamma dx +\int_0^\tau\int_{\Omega}R^\gamma dx\\
&+\int_0^\tau\int_{\Omega}U\cdot\partial_tU \rho+\nabla U U\cdot{\rho u}dxdt\\ 
&-\int_0^\tau\int_{\Omega}{\rho u}\cdot\partial_t U+{(\rho u\otimes u)}:\nabla U dxdt\\
&-\int_0^\tau\int_{\Omega}{\rho^\gamma}\diverg U dxdt\\
&-\int_0^\tau\int_{\Omega}(\gamma R^{\gamma-2}\partial_tR \rho+\gamma R^{\gamma-2}\nabla R \cdot {\rho u})dxdt.
\end{aligned}
\end{equation}
Next, we collect some terms:
\begin{equation}\label{NSequality1}
\begin{aligned}
\int_{\Omega}&R^\gamma dx -\int_{\Omega}(\rho^0)^\gamma dx-\int_0^\tau\int_{\Omega}\gamma R^{\gamma-2}\partial_tR \rho dxdt\\
&=\int_0^\tau\int_{\Omega}\frac{d}{dt}R^\gamma-\gamma R^{\gamma-2}\partial_tR \rho dxdt\\
&=\int_0^\tau\int_{\Omega}\gamma R^{\gamma-1}\partial_tR-\gamma R^{\gamma-2}\partial_tR \rho dxdt\\
&=\int_0^\tau\int_{\Omega}\gamma R^{\gamma-2}\partial_tR(R-\rho)dxdt
\end{aligned}
\end{equation}
and
\begin{equation}\label{NSequality3}
\begin{aligned}
\int_0^\tau\int_{\Omega}&U\cdot\partial_tU \rho+\nabla U U\cdot{\rho u}-{\rho u}\cdot\partial_t U-{(\rho u\otimes u)}:\nabla U dxdt\\
&=\int_0^\tau\int_{\Omega}\partial_tU\cdot {\rho (U-u)}+\nabla U :{(\rho u\otimes(U-u))}dxdt.
\end{aligned}
\end{equation}

Plugging equalities~\eqref{NSequality1} and~\eqref{NSequality3} into~\eqref{NSintermediatestep}, we obtain
\begin{equation}\label{NSintermediatestep2}
\begin{aligned}
E_{rel}(\tau)+&\int_0^\tau\int_{\Omega}\mathbb{S}(\nabla u):(\nabla u-\nabla U)dxdt\\
\leq&\int_0^\tau\int_{\Omega}\gamma R^{\gamma-2}\partial_tR(R-\rho)dxdt \\
&+\int_0^\tau\int_{\Omega}\partial_tU\cdot {\rho(U-u)}+\nabla U :{(\rho u\otimes(U-u))}dxdt\\
&-\int_0^\tau\int_{\Omega}{\rho^\gamma}\diverg U dxdt-\int_0^\tau\int_{\Omega}\gamma R^{\gamma-2}\nabla R \cdot{\rho u}dxdt.
\end{aligned}
\end{equation}
For the last two integrals, we have 
\begin{equation*}
\begin{aligned}
-\int_0^\tau\int_{\Omega}&{\rho^\gamma}\diverg U dxdt-\int_0^\tau\int_{\Omega}\gamma R^{\gamma-2}\nabla R \cdot{\rho u}dxdt\\
&=\int_0^\tau\int_{\Omega}-{\rho^\gamma}\diverg U +\gamma R^{\gamma-2}\nabla R \cdot(RU-{\rho u})-\gamma R^{\gamma-2}\nabla R\cdot RUdxdt\\
&=\int_0^\tau\int_{\Omega}(R^\gamma-{\rho^\gamma})\diverg U +\gamma R^{\gamma-2}\nabla R \cdot(RU-{\rho u})dxdt.\\
%&=\int_0^\tau\int_{\T^n}-a\overline{(H-h)^2}\diverg U +2a\diverg UH(H-\bar{h}) +\gamma H^{\gamma-2}\nabla H \cdot(HU-\overline{hu})dxdt.
\end{aligned}
\end{equation*}
Inserting this back into~\eqref{NSintermediatestep2} and observing that, owing to the mass equation for $(R,U)$, 
\begin{equation*}
\begin{aligned}
\gamma &R^{\gamma-2}\partial_tR(R-\rho)+\gamma R^{\gamma-2}\diverg UR(R-\rho) +\gamma R^{\gamma-2}\nabla R \cdot RU
=\gamma R^{\gamma-2}U\cdot\nabla R\rho,
\end{aligned}
\end{equation*}
we get
\begin{equation}\label{NSintermediatestep3}
\begin{aligned}
E_{rel}(\tau)+&\int_0^\tau\int_{\Omega}\mathbb{S}(\nabla u):(\nabla u-\nabla U)dxdt\\
\leq&\int_0^\tau\int_{\Omega}\gamma R^{\gamma-2}\cdot\nabla R{\rho(U-u)}dxdt \\
&+\int_0^\tau\int_{\Omega}\partial_tU\cdot {\rho(U-u)}+\nabla U :{(\rho u\otimes(U-u))}dxdt\\
&-\int_0^\tau\int_{\Omega}\gamma R^{\gamma-1}\diverg U(R-\rho )dxdt+\int_0^\tau\int_{\Omega}(R^\gamma-{\rho^\gamma})\diverg U.
\end{aligned}
\end{equation}
The integrand in the third line can be rewritten pointwise as 
\begin{equation}\label{NSpointwiseest}
\begin{aligned}
\partial_tU&\cdot {\rho(U-u)}+\nabla U :{(\rho u\otimes(U-u))}\\
&=\partial_tU\cdot {\rho(U-u)}+\nabla U :{(\rho U\otimes(U-u))}+\nabla U :{(\rho(u-U)\otimes(U-u))},
\end{aligned}
\end{equation}
and the integral of the last term and the one from the last line in~\eqref{NSintermediatestep3} can be estimated by
\begin{equation}\label{NSabsorb}
C\norm{U}_{C^1}\int_0^\tau E_{rel}(t)dt.
\end{equation}
For the remaining terms in~\eqref{NSpointwiseest} we obtain by the momentum equation for $(R,U)$
\begin{equation}\label{NSmomentum}
\begin{aligned}
\partial_tU&\cdot {\rho(U-u)}+\nabla U :U\otimes{\rho(U-u)}\\
&=\frac{1}{R}(\partial_t(RU)+\diverg(RU\otimes U))\cdot{\rho(U-u)}\\
&= - \gamma R^{\gamma-2}\nabla R\cdot{\rho(U-u)}+\frac{1}{R}\diverg\mathbb{S}(\nabla U)\cdot\rho(U-u).
\end{aligned}
\end{equation}
Combining~\eqref{NSintermediatestep3},~\eqref{NSabsorb}, and~\eqref{NSmomentum}, we obtain
\begin{equation*}
\begin{aligned}
E_{rel}(\tau)&+\int_0^\tau\int_{\Omega}\mathbb{S}(\nabla u):(\nabla u-\nabla U)dxdt\\
\leq &C\norm{U}_{C^1}\int_0^\tau E_{rel}(t)dt+\int_0^\tau\int_{\Omega}\frac{1}{R}\diverg\mathbb{S}(\nabla U)\cdot\rho(U-u)dxdt
\end{aligned}
\end{equation*}
and thus
\begin{equation*}
\begin{aligned}
E_{rel}(\tau)&+\int_0^\tau\int_{\Omega}(\mathbb{S}(\nabla u)-\mathbb{S}(\nabla U)):(\nabla u-\nabla U)dxdt\\
\leq &C\norm{U}_{C^1}\int_0^\tau E_{rel}(t)dt+\int_0^\tau\int_{\Omega}\frac{1}{R}\diverg\mathbb{S}(\nabla U)\cdot\rho(U-u)-\mathbb{S}(\nabla U):(\nabla u-\nabla U)dxdt.
\end{aligned}
\end{equation*}
The desired result will follow from Gr\"onwall's inequality if we can absorb the remaining viscosity term
\begin{equation*}
\int_0^\tau\int_{\Omega}\frac{1}{R}\diverg\mathbb{S}(\nabla U)\cdot\rho(U-u)-\mathbb{S}(\nabla U):(\nabla u-\nabla U)dxdt
\end{equation*}
into the relative energy. First observe that, by the divergence theorem, 
\begin{equation*}
\begin{aligned}
&\int_0^\tau\int_{\Omega}\frac{1}{R}\diverg\mathbb{S}(\nabla U)\cdot\rho(U-u)-\mathbb{S}(\nabla U):(\nabla u-\nabla U)dxdt\\
&=\int_0^\tau\int_{\Omega}\frac{1}{R}\diverg\mathbb{S}(\nabla U)\cdot(\rho-R)(U-u)dxdt.
\end{aligned}
\end{equation*}
By our assumptions on $U$ and $R$, the factor $R^{-1}\diverg\mathbb{S}(\nabla U)$ is bounded in $L^\infty$, so it remains to estimate $\int\int|\rho-R||U-u|dxdt$.

To this end, we split the domain of integration into the three parts 
\begin{equation*}
\Omega_<\cup\Omega_\sim\cup\Omega_>:=\{\rho\leq  \underline{R}/2\}\cup\{\underline{R}/2<\rho<2\overline{R}\}\cup\{\rho\geq2\overline{R}\},
\end{equation*}
where $\underline{R}:=\inf_{\bar{\Omega}\times[0,T]}R(x,t)>0$ and $\overline{R}:=\sup_{\bar{\Omega}\times[0,T]}R(x,t)<\infty$.

Since the map $s\mapsto s^\gamma$ is smooth and strictly convex in $(0,\infty)$ and the range of $R$ is compact, there exists a constant (depending only on $R$, $\Omega$, $d$, and $\gamma$) such that on $\Omega_<\cup\Omega_\sim$,
\begin{equation*}
(\rho-R)^2\leq C \left({\frac{1}{\gamma-1}\rho^\gamma-\frac{\gamma}{\gamma-1}R^{\gamma-1}\rho+R^\gamma}\right).
\end{equation*}
On the other hand, on $\Omega_>$ the term $\rho^\gamma$ dominates, and therefore there exists another constant (also denoted $C$) such that on $\Omega_>$,
\begin{equation*}
\rho\leq  C \left({\frac{1}{\gamma-1}\rho^\gamma-\frac{\gamma}{\gamma-1}R^{\gamma-1}\rho+R^\gamma}\right).
\end{equation*}
Armed with these estimates, we find
\begin{equation*}
\begin{aligned}
\int\int_{\Omega_<}&|\rho-R||U-u|dxdt\\
&\leq C(\delta) \int_0^\tau\int_{\Omega}(\rho-R)^2dxdt+ \delta\int_0^\tau\int_{\Omega}|U-u|^2dxdt\\
&\leq C(\delta)\int_0^\tau E_{rel}(t)dt+\int_0^\tau\int_{\Omega}(\mathbb{S}(\nabla u)-\mathbb{S}(\nabla U)):(\nabla u-\nabla U)dxdt,
\end{aligned}
\end{equation*} 
where we chose a sufficiently small number $\delta>0$ and invoked the Poincar\'e-Korn inequality
\begin{equation*}
\int_\Omega|w|^2\leq C\int_{\Omega} \mathbb{S}(\nabla w):\nabla wdx
\end{equation*}
for all $w\in H_0^1(\Omega)$.

Further, we have
\begin{equation*}
\begin{aligned}
\int\int_{\Omega_\sim}&|\rho-R||U-u|dxdt\\
&\leq \frac{1}{2}\int\int_{\Omega_\sim}\frac{(\rho-R)^2}{\sqrt{\rho}}dxdt+ \frac{1}{2}\int\int_{\Omega_\sim}\frac{1}{\sqrt{\rho}}\rho|U-u|^2dxdt\\
&\leq C\int_0^\tau E_{rel}(t)dt,
\end{aligned}
\end{equation*}
since $\rho$ is bounded away from zero on $\Omega_\sim$.
\end{proof}
Finally, we estimate (keeping in mind $\rho>R$ on $\Omega_>$)
\begin{equation*}
\begin{aligned}
\int\int_{\Omega_>}&|\rho-R||U-u|dxdt\\
&\leq \int\int_{\Omega_>}\rho|U-u|dxdt\\
&\leq\frac{1}{2}\int\int_{\Omega_>}\rho dxdt+ \frac{1}{2}\int\int_{\Omega_>}\rho|U-u|^2dxdt\\
&\leq C \int_0^\tau E_{rel}(t)dt.
\end{aligned}
\end{equation*}
Putting everything together, we obtain $E_{rel}(\tau)\leq C\int_0^\tau E_{rel}(t)dt$, and the claim follows.

\section{Physical Boundaries in the Inviscid Situation}\label{boundary}
Choosing $\T^d$ as the spatial domain (thus imposing periodic boundary conditions), as we did for the incompressible and compressible Euler equations, often comes in handy, because the torus is compact but has no boundary. All the arguments so far carry over to the whole space $\R^d$ with only a little more effort.

The story becomes very different upon the appearance of physical boundaries: Weak-strong uniqueness can fail in this situation due to the possible formation of a boundary layer in inviscid models. Let us discuss again the incompressible Euler equations, for which the usual slip boundary conditions are required:
\begin{equation*}
u\cdot n=0\hspace{0.5cm}\text{on $\partial\Omega$},
\end{equation*}
where $n$ denotes the outward unit normal to the boundary of $\Omega$. For weak solutions, in general only $u\in L^2_{loc}(\Omega\times(0,T))$, and so it makes no sense to evaluate $u$ on the boundary. Instead one defines the space $H(\Omega)$ of \term{solenoidal vectorfields}, which is the completion in the norm of $L^2(\Omega)$ of the space
\begin{equation*}
\left\{w\in C_c^\infty(\Omega;\R^d): \diverg{w}=0\right\}.
\end{equation*}
A solenoidal vector field thus satisfies the divergence-free and slip boundary conditions in a weak sense. This allows us to define the notion of admissible weak solution on $\Omega\times[0,T]$:
\begin{definition}
A vectorfield $u\in L^\infty((0,T);H(\Omega))$ is called an \term{admissible weak solution} of the incompressible Euler equations with initial data $u^0\in H(\Omega)$ if
\begin{equation}\label{weakbound}
\begin{aligned}
\int_0^\tau\int_{\Omega}\partial_t\phi(x,t)\cdot u(x,t)+\nabla\phi(x,t):(u\otimes u)(x,t)dxdt=\int_{\Omega}u(x,\tau)\cdot\phi(x,\tau)-u^0(x)\cdot\phi(x,0)dx
\end{aligned}
\end{equation} 
for almost every $\tau\in(0,T)$ and every $\phi\in C^1_c(\Omega\times[0,T];\R^d)$ with $\diverg\phi=0$, and if for almost every $\tau\in(0,T)$ we have
\begin{equation*}
\frac{1}{2}\int_{\Omega}|u(x,\tau)|^2dx\leq\frac{1}{2}\int_\Omega|u^0(x)|^2dx.
\end{equation*}
\end{definition}

In order to show weak-strong uniqueness, we would like to use the relative energy argument as for Theorem \ref{basicthm}. To do this, we would have to use the strong solution $U$ as a test function in \eqref{weakbound}. However, even a classical smooth solution of the Euler equations is only required to satisfy $v\cdot n=0$ on the boundary, whereas the tangential component may be nonzero. But the test function in \eqref{weakbound} has to be zero at the boundary in \emph{every} direction. And indeed it is possible to produce an example of failure of weak-strong uniqueness that exploits exactly this discrepancy. Before we state it, let us make the following remark:\footnote{I would like to thank Eduard Feireisl for pointing this out.} A possible remedy for the problem at hand would be to demand that \eqref{weakbound} hold for all $\phi\in C^1(\Omega\times[0,T];\R^d)$ with $\diverg\phi=0$ and $\phi\cdot n=0$ on $\partial\Omega$ (thus admitting a non-vanishing tangential component for the test function). Then weak-strong uniqueness would follow exactly as in Theorem \ref{basicthm}. We argue however that this ad hoc extension of the class of test functions would amount to begging the question in favour of weak-strong uniqueness: A boundary-value problem for a system of partial differential equations imposes the equations \emph{in the interior} of the domain, which is guaranteed by testing only against compactly supported test fields, and by requiring the boundary condition, which is encoded in the assumption $u\in H(\Omega)$ for almost every time. The further requirement that \eqref{weakbound} hold additionally for test fields with non-vanishing tangential component, however, lacks justification from the modelling perspective.

\begin{theorem}[\cite{eulerboundary}]
There exists a smooth bounded domain $\Omega\subset\R^2$, a time $T>0$, smooth data $u^0\in C^\infty(\Omega)$ with $\diverg(u^0)=0$ and $u^0\cdot n=0$ on $\partial\Omega$, a smooth solution $U\in C^\infty(\Omega\times[0,T])$ with data $u^0$, and infinitely many admissible weak solutions with the same data. 
\end{theorem}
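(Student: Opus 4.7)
The plan is to invoke the convex-integration machinery of De~Lellis and Sz\'ekelyhidi, adapted to a domain with physical boundary as in \cite{eulerboundary}. The strategy follows the subsolution principle: construct a smooth subsolution carrying the prescribed initial datum whose Reynolds stress can be converted into wild oscillations confined to the interior of $\Omega$.

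\emph{Setup.} First I would take $\Omega\subset\R^2$ to be an annulus and let $U(x) := f(|x|)\,x^\perp/|x|$ be a smooth rotationally symmetric shear, with $f$ smooth and compactly supported in the annular range of $|x|$. Such a $U$ is automatically divergence-free, stationary for an appropriate radial pressure, tangent to $\partial\Omega$, and has nonzero tangential trace on $\partial\Omega$. I set $u^0 := U$, which is smooth and satisfies the required compatibility conditions. The mismatch between the nonzero tangential trace of $U$ and the zero trace forced on any $C^1_c$ test field is precisely what opens the door to non-uniqueness.

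\emph{Subsolution.} Second, I would construct a smooth subsolution $(\bar u,\bar q,R)$ of the Euler--Reynolds system
\begin{equation*}
\partial_t\bar u + \diverg(\bar u\otimes\bar u - R) + \nabla\bar q = 0,\qquad \diverg\bar u=0,\qquad \bar u\cdot n=0 \text{ on }\partial\Omega,
\end{equation*}
with $R$ symmetric positive definite on a space-time set of positive measure and $\bar u(\cdot,0)=u^0$. A convenient ansatz is $\bar u(x,t) := \eta(t)U(x)$ with $\eta\in C^\infty([0,T])$, $\eta(0)=1$ and $\eta$ strictly decreasing; the rotational symmetry then produces a diagonal positive-definite $R$ and a radial $\bar q$, and the initial trace of $\bar u$ equals $u^0$. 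This is the crucial step that makes the initial datum agree with the smooth solution while providing a reservoir of ``excess'' energy stored in $\operatorname{tr} R$.

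\emph{Convex integration and admissibility.} Third, I would run the De~Lellis--Sz\'ekelyhidi iteration, adding localised Beltrami- or Mikado-type plane-wave perturbations supported in the interior of $\{R\succ0\}$, and extracting, via a Baire-category argument on a suitable space of subsolutions, uncountably many bounded weak solutions $u$ of \eqref{euler} with $u(\cdot,0)=u^0$, $u\cdot n=0$ on $\partial\Omega$, and the pointwise energy identity $\tfrac12|u|^2 = \tfrac12|\bar u|^2 + \tfrac12\operatorname{tr} R$. Integrating in space and exploiting the construction of $R$ yields $\tfrac12\int_\Omega|u(\cdot,t)|^2\dx \le \tfrac12\int_\Omega|u^0|^2\dx$ for a.e.\ $t$, so every such solution is globally admissible. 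Distinct choices of amplitudes and phases at each iteration step produce genuinely distinct weak solutions.

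\emph{Main obstacle.} The principal difficulty will be the compatibility of the convex-integration building blocks with the slip boundary condition: the canonical plane-wave perturbations live on the torus, and naive cutting off near $\partial\Omega$ would destroy divergence-freeness and introduce boundary commutator errors. I would overcome this by supporting every perturbation in a fixed compact subset of the interior of $\{R\succ0\}$, strictly separated from $\partial\Omega$, so that the slip condition is automatically preserved and no Bogovski\u\i-type correction is needed; the rotational geometry of the ansatz makes this localisation transparent. Ensuring that the gain of oscillation still fills the Reynolds stress uniformly on this interior region, while keeping the iteration convergent in $L^\infty_tL^2_x$, is the technical crux of the argument.
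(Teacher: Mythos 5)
Your overall framework (smooth stationary solution with the boundary mismatch, subsolution with a Reynolds stress, convex integration) points in the right direction, but the way you resolve your own ``main obstacle'' is precisely the step that cannot work, and it contradicts a theorem proved two pages later in this survey. If every perturbation is supported in a fixed compact subset of the interior, strictly separated from $\partial\Omega$, then each weak solution you produce equals the smooth subsolution $\bar u=\eta(t)U$ on a neighbourhood of $\partial\Omega\times[0,T]$ and is in particular continuous there. The restored weak-strong uniqueness theorem at the end of Section~\ref{boundary} then forces $u=U$ almost everywhere, which is incompatible with $u=\eta(t)U\neq U$ near the boundary (or, if $f$ vanishes near $\partial\Omega$, with $R\not\equiv 0$). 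Equivalently: away from the boundary the relative-energy argument of Theorem~\ref{basicthm} is in full force, so no interior-localised construction can beat it. This is not a technical inconvenience but the entire content of the theorem: the non-uniqueness is \emph{caused} by the boundary, and in the actual proof in \cite{eulerboundary} the turbulent zone (the support of the Reynolds stress and of all perturbations) emanates \emph{from} $\partial\Omega$ and grows into the domain, the boundary playing the role of one half of the vortex sheet in Sz\'ekelyhidi's construction \cite{vortexsheet}. Note also an internal inconsistency in your setup: if $f$ is compactly supported in the open annular range of $|x|$, then $U$ vanishes near $\partial\Omega$ and has \emph{zero} tangential trace, so the boundary mismatch you invoke is absent and the whole configuration extends by zero to the torus, where Theorem~\ref{basicthm} again excludes any admissible competitor.

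There is a second, quantitative failure in the ansatz $\bar u=\eta(t)U$. With $U=f(r)e_\theta$ one has $\diverg(U\otimes U)=-\nabla P$ for a radial $P$, so the Euler--Reynolds equation demands $\diverg R=\eta'(t)U+\nabla(\cdot)$. Since $\eta'(t)f(r)e_\theta$ is not a gradient on the annulus, any admissible $R$ must carry an off-diagonal (in polar coordinates) component $c$ with $\partial_r c+2c/r=\eta'f$, hence $|c|\gtrsim|\eta'(t)|$; positive semidefiniteness then forces $\operatorname{tr}R\geq 2|c|\gtrsim|\eta'(t)|$. On the other hand the global energy inequality, via the identity $\tfrac12|u|^2=\tfrac12|\bar u|^2+\tfrac12\operatorname{tr}R$ you invoke, caps $\int\operatorname{tr}R(t)\,dx$ by $(1-\eta(t)^2)\int|U|^2dx$. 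As $t\searrow 0$ one has $1-\eta(t)^2=\SmallO(|\eta'(t)|)\cdot t$ up to constants, so the required trace exceeds the available energy budget for small $t$, for \emph{any} nonconstant $\eta$ with $\eta(0)=1$. This is just the weak-strong uniqueness obstruction made explicit, and it is circumvented in \cite{eulerboundary} only because the subsolution there is not of the separated form $\eta(t)U$: its Reynolds stress lives in a thin layer attached to the boundary whose width grows linearly in time, so that both its trace and its interaction with compactly supported test functions vanish at the correct rate as $t\searrow0$.
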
  
We refer to \cite{eulerboundary} for the proof, which is based on convex integration techniques, in particular on a construction of L.\ Sz\'ekelyhidi \cite{vortexsheet}.

The good news is that weak-strong uniqueness can be restored by requiring the weak solution not to behave ``wildly" near the boundary:

\begin{theorem}
Let $\Omega\in\R^2$ be a smooth bounded domain, $U\in C^1(\bar{\Omega}\times[0,T])$ a solution of the Euler equations with $U(\cdot,0)=u^0$, and $u\in L^\infty((0,T);H(\Omega))$ an admissible weak solution with initial data $u^0$ such that $u$ is continuous\footnote{In the original paper \cite{eulerboundary}, H\"older continuity is required. However, the H\"older condition is not necessary in the proof, which works just fine even if $u$ is merely continuous.} in a neighbourhood of $\partial\Omega\times[0,T]$. Then, $u=U$ for almost every $(x,t)\in\Omega\times(0,T)$.
\end{theorem}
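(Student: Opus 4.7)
The plan is to emulate the relative energy argument of Theorem~\ref{basicthm} on $\Omega\times(0,T)$. The sole obstacle is that the strong solution $U$ has, in general, nonzero tangential trace on $\partial\Omega$ and is therefore not itself a permissible test field in~\eqref{weakbound}. The strategy is to construct a sequence $\phi_\epsilon\in C^1_c(\Omega\times[0,T];\R^2)$ of divergence-free fields approximating $U$, pass to the limit $\epsilon\to 0$ in~\eqref{weakbound} tested against $\phi_\epsilon$, and close the relative energy estimate by Gr\"onwall. The continuity of $u$ near $\partial\Omega$, combined with $u\in H(\Omega)$, will be the key ingredient allowing the limit to be taken in the nonlinear term.

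For the approximation, I would pick $\chi_\epsilon\in C^\infty(\bar\Omega;[0,1])$ equal to $1$ outside $\Omega_\epsilon:=\{x\in\Omega:\dist(x,\partial\Omega)<\epsilon\}$, vanishing on $\Omega_{\epsilon/2}$, and with $\|\nabla\chi_\epsilon\|_{L^\infty}\leq C/\epsilon$. The product $\chi_\epsilon U$ lies in $C^1_c(\Omega\times[0,T];\R^2)$, but has divergence $\nabla\chi_\epsilon\cdot U$ concentrated on the annulus $\omega_\epsilon:=\Omega_\epsilon\setminus\Omega_{\epsilon/2}$. Since $U\cdot n=0$ on $\partial\Omega$ and $U\in C^1(\bar\Omega)$, a Taylor expansion gives $|U\cdot\nabla\chi_\epsilon|=O(1)$ on $\omega_\epsilon$, while the integral over $\Omega$ vanishes by Stokes' theorem. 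A Bogovskii-type operator on $\omega_\epsilon$, constructed uniformly in $\epsilon$ through a finite atlas of boundary charts and rescaling, then produces $V_\epsilon\in C^1_c(\omega_\epsilon\times[0,T];\R^2)$ with $\diverg V_\epsilon=\nabla\chi_\epsilon\cdot U$ and the scaling bounds $\|V_\epsilon\|_{L^\infty}=O(\epsilon)$, $\|\nabla V_\epsilon\|_{L^2}=O(\sqrt\epsilon)$. I then set $\phi_\epsilon:=\chi_\epsilon U - V_\epsilon$.

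Passing to the limit in the linear terms of~\eqref{weakbound} is immediate from $\chi_\epsilon\to 1$ boundedly and the vanishing of $V_\epsilon$ in $L^2$. The nonlinear contribution decomposes as
\[
\int_0^\tau\!\!\int_\Omega\chi_\epsilon(u\otimes u):\nabla U\,dxdt + \int_0^\tau\!\!\int_\Omega(u\cdot U)(u\cdot\nabla\chi_\epsilon)\,dxdt - \int_0^\tau\!\!\int_\Omega(u\otimes u):\nabla V_\epsilon\,dxdt.
\]
The first integral converges by dominated convergence; the second is bounded by $C\,\omega_u(\epsilon)$ because continuity of $u$ together with $u\in H(\Omega)$ force $u\cdot n\equiv 0$ pointwise on $\partial\Omega$, so $|u\cdot\nabla\chi_\epsilon|\leq C\,\omega_u(\epsilon)/\epsilon$ on $\omega_\epsilon$ with $\omega_u$ a modulus of continuity of the normal component of $u$; and the third is $O(\|\nabla V_\epsilon\|_{L^2}\|u\|_{L^4(\omega_\epsilon)}^2)=O(\epsilon)$ by Cauchy-Schwarz and the $L^\infty$ bound on $u$ near $\partial\Omega$. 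After the limit is taken, the calculation of Theorem~\ref{basicthm} carries over verbatim, using $\int_\Omega(u-U)\cdot\nabla(|U|^2/2)\,dx=0$ and the analogous disappearance of the pressure, both justified by $u-U\in H(\Omega)$. The hardest step will be the Bogovskii construction with $\epsilon$-uniform constants on the thin strip $\omega_\epsilon$; this should be feasible by flattening $\omega_\epsilon$ through boundary charts into a product of a fixed-scale piece of $\partial\Omega$ with an interval of width $\epsilon/2$ and rescaling, but the scaling of the Bogovskii operator norms must be verified with some care.
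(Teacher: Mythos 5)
Your overall architecture (cut off $U$ near $\partial\Omega$, repair the divergence by a corrector supported in the collar $\omega_\epsilon$, pass to the limit in \eqref{weakbound}, then run the relative energy/Gr\"onwall argument of Theorem \ref{basicthm}) is the same as the paper's, and several of your ingredients are right: the observation that continuity of $u$ near the boundary together with $u\in H(\Omega)$ forces $u\cdot n\to 0$ at $\partial\Omega$, and the resulting bound $C\,\omega_u(\epsilon)$ for $\int(u\cdot U)(u\cdot\nabla\chi_\epsilon)$, are exactly what the paper uses. The gap is in the corrector $V_\epsilon$: the scaling bounds you assign to it are not achievable, and with the achievable ones your estimate of $\int(u\otimes u):\nabla V_\epsilon$ does not close. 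Indeed, if $V_\epsilon\in C^1_c(\omega_\epsilon;\R^2)$ solves $\diverg V_\epsilon=\nabla\chi_\epsilon\cdot U$, then integrating this equation over $\{x_1<a\}\cap\omega_\epsilon$ (in boundary coordinates) shows that the flux of $V_\epsilon$ through the cross-section $\{x_1=a\}\cap\omega_\epsilon$, a segment of length $\sim\epsilon$, equals $\int_{\{x_1<a\}\cap\Omega}\diverg((\chi_\epsilon-1)U)\,dx\approx -c\,\epsilon\,(U\cdot\tau)(a,0)$; hence $\|V_\epsilon\|_{L^\infty}\gtrsim\sup_{\partial\Omega}|U\cdot\tau|=O(1)$, not $O(\epsilon)$, and by Poincar\'e in the normal direction $\|\nabla V_\epsilon\|_{L^2(\omega_\epsilon)}\gtrsim\epsilon^{-1}\|V_\epsilon\|_{L^2(\omega_\epsilon)}\gtrsim\epsilon^{-1/2}$, not $O(\sqrt\epsilon)$. (This is the familiar degeneration of the Bogovskii constant on thin domains, proportional to the aspect ratio.) With the true scalings your third term is only $O\bigl(\epsilon^{-1/2}\|u\|^2_{L^4(\omega_\epsilon)}\bigr)=O(1)$, so it does not vanish.

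What saves the argument --- and what the paper's Kato-type construction delivers automatically in two dimensions --- is an anisotropic structure of the corrector that a generic Bogovskii solution need not have: one needs $V_\epsilon$ to be tangential to leading order, with only its \emph{normal} derivative of size $1/\epsilon$, so that the dangerous $O(1/\epsilon)$ part of $\nabla V_\epsilon$ is paired with $u_\tau\,(u\cdot n)$ and the factor $u\cdot n=o(1)$ rescues the estimate, exactly as in your treatment of the second term. The paper obtains such a corrector explicitly by writing $U=\nabla^\perp\psi$ with $\psi=0$ on $\partial\Omega$ and setting $U^\epsilon=\nabla^\perp(\chi^\epsilon\psi)$; the corrector is then $\epsilon^{-1}\psi\,\chi'(x_2/\epsilon)\,e_1$, purely tangential, with $|\psi|\le C\epsilon$ on the collar, and the worst term $\epsilon^{-2}\psi\,\chi''\,u_1u_2$ is bounded by $C\epsilon^{-1}|u_2|$ on a set of measure $O(\epsilon)$, hence tends to zero. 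If you wish to keep the Bogovskii language you must build this tangential structure into the operator by hand; as written, the claim that the third integral is $O(\epsilon)$ is false, and this is the step on which the proof currently breaks.
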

\begin{proof}
We omit some details in the sequel; for the full proof, see \cite{eulerboundary}.

For simplicity we work only on the upper half-plane $\Omega=\{(x_1,x_2)\in\R: x_2>0\}$\footnote{This domain is of course not bounded, but we will ignore this issue since our problem is of a local nature.}. Let $\epsilon>0$ be so small that $u$ is continuous on $\Gamma^\epsilon:=\{(x_1,x_2)\in \bar{\Omega}: x_2<\epsilon\}$. Moreover let $\chi\in C_c^\infty((0,\infty];\R)$ be a smooth function such that $0\leq\chi\leq1$ and $\chi(s)=1$ for $s\geq1$, and set $\chi^\epsilon(s)=\chi(s/\epsilon)$. Since $U$ is divergence-free and satisfies the slip boundary condition, there exists (by Poincar\'e's Lemma) a scalar potential $\psi\in C([0,T];C^2(\bar{\Omega}))\cap C^1(\bar{\Omega}\times[0,T])$ with $\psi=0$ on $\partial\Omega$ such that $U=\nabla^\perp\psi$. Inspired by Kato's famous paper \cite{kato}, we now set
\begin{equation*}
U^\epsilon(x_1,x_2,t):=\nabla^\perp\left[\chi^\epsilon\left(x_2\right)\psi(x_1,x_2,t)\right].
\end{equation*}
Then $U^\epsilon$ is $C^1$, compactly supported and divergence-free, and thus qualifies as a test function in \eqref{weakbound}. 

Hence we can estimate, for almost every $\tau\in(0,T)$,
\begin{equation}\label{REbound}
\begin{aligned}
&\frac{1}{2}\int_\Omega|u(\tau)-U^\epsilon(\tau)|^2dx\\
=&\frac{1}{2}\int_\Omega|u(\tau)|^2dx+\frac{1}{2}\int_\Omega|U^\epsilon(\tau)|^2dx-\int_\Omega u(\tau)\cdot U^\epsilon(\tau)dx\\
\leq&\frac{1}{2}\int_\Omega|U^\epsilon(\tau)|^2dx-\frac{1}{2}\int_\Omega|u^0|^2dx-\int_0^\tau\int_\Omega(\partial_t U^\epsilon\cdot u+\nabla U^\epsilon:(u\otimes u))dxdt.
\end{aligned}
\end{equation}
We now want to let $\epsilon\searrow 0$ in order to obtain an estimate for the relative energy between $u$ and $U$. To this end, observe that
\begin{equation}\label{corrector}
\begin{aligned}
U^\epsilon(x,\tau)=&\nabla^\perp\left[\chi^\epsilon\left(x_2\right)\psi(x_1,x_2,\tau)\right]\\
=&\chi^\epsilon(x_2)\nabla^\perp\psi(x_1,x_2,t)+\frac{1}{\epsilon}\psi(x_1,x_2,\tau)\chi'\left(\frac{x_2}{\epsilon}\right)e_1\\
=&\chi^\epsilon(x_2)U(x,\tau)+\frac{1}{\epsilon}\psi(x_1,x_2,\tau)\chi'\left(\frac{x_2}{\epsilon}\right)e_1.
\end{aligned}
\end{equation}
The first term clearly converges strongly in $L^2(\Omega)$ to $U$, uniformly in time, as $\epsilon\searrow 0$. For the second term, recall that $\psi$ is twice continuously differentiable in space, uniformly in time, and assumes the value zero at $x_2=0$, so that there exists a constant $C$ such that
\begin{equation*}
|\psi(x,\tau)|\leq Cx_2\leq C\epsilon
\end{equation*} 
in the boundary region $\Gamma^\epsilon$. Together with the observation that $\chi'(\cdot/\epsilon)$ vanishes outside $\Gamma^\epsilon$, we obtain convergence in $L^2(\Omega)$ of the second term of \eqref{corrector} to zero, uniformly in time. Hence we get $U^\epsilon\to U$ in $L^2(\Omega)$ as $\epsilon\searrow 0$, uniformly in time. In a similar way one can see  $\partial_tU^\epsilon\to \partial_tU$ in $L^2(\Omega)$ as $\epsilon\searrow 0$, uniformly in time.

The more problematic term is $\int_0^\tau\int_\Omega \nabla U^\epsilon:(u\otimes u) dxdt$. We split it as
\begin{equation*}
\int_0^\tau\int_\Omega \partial_1 U_1^\epsilon u_1^2 dxdt+\int_0^\tau\int_\Omega \partial_1 U_2^\epsilon u_1u_2 dxdt+\int_0^\tau\int_\Omega \partial_2 U_1^\epsilon u_1u_2 dxdt+\int_0^\tau\int_\Omega \partial_2 U_2^\epsilon u_2^2 dxdt.
\end{equation*}
In view of \eqref{corrector}, we expect the third integral to behave worst, so we skip the other integrals. We estimate it in the following way, again using \eqref{corrector}:
\begin{equation*}
\begin{aligned}
&\left|\int_0^\tau\int_\Omega \partial_2 (U_1^\epsilon-U_1) u_1u_2 dxdt\right|\\
\leq&\left|\int_0^\tau\int_\Omega (\chi^\epsilon(x_2)-1)\partial_2 U_1 u_1u_2 dxdt\right|+\frac{1}{\epsilon}\left|\int_0^\tau\int_\Omega \partial_2\psi(x,\tau)\chi'\left(\frac{x_2}{\epsilon}\right) u_1u_2 dxdt\right|\\
&\hspace{0.4cm}+\frac{1}{\epsilon^2}\left|\int_0^\tau\int_\Omega \psi(x,\tau)\chi''\left(\frac{x_2}{\epsilon}\right) u_1u_2 dxdt\right|.\\
\end{aligned}
\end{equation*}
The first term clearly converges to zero. For the second term, observe that $\chi'$ is supported on $\Gamma^\epsilon$, where by assumption $u$ is continuous; therefore, $u_1$, $\nabla\psi$, and $\chi'(\cdot/\epsilon)$ are bounded on the domain of integration, and by the slip boundary condition $u(x_1,x_2,t)\to0$ as $x_2\to0$, uniformly in $x_1, t$. It follows that the second integral is bounded by
\begin{equation*}
C\frac{1}{\epsilon}\int_{\Gamma^\epsilon}u_2(x_1,x_2,t)dxdt\to0.
\end{equation*} 
Finally, the third integral can be estimated similarly, because $\chi''(\cdot/\epsilon)$ is again supported on $\Gamma^\epsilon$, and there we have as above $|\psi(x_1,x_2,t)|\leq C\epsilon$.

We have thus shown that taking the limit $\epsilon\searrow0$ in \eqref{REbound} yields
\begin{equation*}
\frac{1}{2}\int_\Omega|u(\tau)-U(\tau)|^2dx
\leq-\int_0^\tau\int_\Omega(\partial_t U\cdot u+\nabla U:(u\otimes u))dxdt,
\end{equation*}
which allows us to continue exactly as in the proof of Theorem \ref{basicthm}.
\end{proof}

\section{An Alternative Approach}\label{alternative}
We present here an alternative proof of weak-strong-uniqueness for the incompressible Euler equations, inspired by arguments used in the context of so-called statistical solutions \cite{fjordholmetal}\footnote{I would like to thank U.\ S.\ Fjordholm and S.\ Mishra for valuable discussions on this subject.}. This approach may be of some interest, as it demonstrates the usefulness of doubling of variables techniques for the Euler equations, and avoids the assumption of time differentiability of the strong solution. We work again on the torus $\T^d$.

Write the Euler equations in components,
\begin{equation*}
\partial_t u^i(x)+\sum_{j=1}^d\partial_ju^i(x) u^j(x)+\partial_i p(x)=0.
\end{equation*}
If $U$ is the strong solution, we have similarly (but evaluated at another point $y\in\T^d$)
\begin{equation*}
\partial_t U^i(y)+\sum_{j=1}^d\partial_jU^i(y) U^j(y)+\partial_i P(y)=0.
\end{equation*}
Multiplying the first equation by $U^i(y)$, the second by $u^i(x)$, and adding everything together, we arrive at
\begin{equation}\label{correlation}
\begin{aligned}
\partial_t(u(x)\cdot U(y))&+\diverg_x((u(x)\cdot U(y))u(x))+\diverg_y((u(x)\cdot U(y))U(y))\\
+&\diverg_x(p(x)U(y))+\diverg_y(P(y)u(x))=0.
\end{aligned}
\end{equation}
Thanks to its divergence structure, this last equation admits a formulation in the sense of distributions, and it can be shown rigorously (as in Lemma 3.1 in \cite{fjordholmetal}) that \eqref{correlation} holds in such a weak sense if $u,U$ are weak solutions of the Euler equations. 

Therefore, we have for any $\phi\in C^1(\T^d\times\T^d\times[0,T])$
\begin{equation}\label{doubling}
\begin{aligned}
\frac{1}{2}&\int_0^T\int_{\T^d}\int_{\T^d}\partial_t\phi(x,y,t)|u(x)-U(y)|^2dxdydt\\
=&\frac{1}{2}\int_0^T\int_{\T^d}\int_{\T^d}\partial_t\phi(x,y,t)|u(x)|^2dxdydt+\frac{1}{2}\int_0^T\int_{\T^d}\int_{\T^d}\partial_t\phi(x,y,t)|U(y)|^2dxdydt\\
&\hspace{1cm}-\int_0^T\int_{\T^d}\int_{\T^d}\partial_t\phi(x,y,t)u(x)\cdot U(y)dxdydt\\
=&\frac{1}{2}\int_0^T\int_{\T^d}\int_{\T^d}\partial_t\phi(x,y,t)|u(x)|^2dxdydt+\frac{1}{2}\int_0^T\int_{\T^d}\int_{\T^d}\partial_t\phi(x,y,t)|U(y)|^2dxdydt\\
&\hspace{1cm}+\int_0^T\int_{\T^d}\int_{\T^d}(\nabla_x\phi(x,y,t)\cdot u(x))(u(x)\cdot U(y))dxdydt\\
&\hspace{1cm}+\int_0^T\int_{\T^d}\int_{\T^d}(\nabla_y\phi(x,y,t)\cdot U(y))(U(y)\cdot u(x))dxdydt\\
&\hspace{1cm}-\int_0^T\int_{\T^d}\int_{\T^d}\nabla_x\phi(x,y,t)\cdot p(x)U(y)dxdydt\\
&\hspace{1cm}-\int_0^T\int_{\T^d}\int_{\T^d}\nabla_y\phi(x,y,t)\cdot P(y)u(x)dxdydt.\\
\end{aligned}
\end{equation} 
For the rest of the argument, assume in addition that $U$ is $C^1$ in space and that $u$ is globally admissible. Choose 
\begin{equation}\label{testchoice}
\phi(x,y,t)=\eta^\epsilon\left(\frac{x-y}{2}\right)\chi(t)
\end{equation}  
for a standard mollifier $\eta$, $\eta^\epsilon(x)=\epsilon^{-d}\eta(x/\epsilon)$, and some $\chi\in C^1([0,T])$. It is easy to see (by the Lebesgue Differentiation Theorem) that the left hand side of \eqref{doubling} converges to
\begin{equation*}
\frac{1}{2}\int_0^T\int_{\T^d}\chi'(t)|u(x)-U(x)|^2dxdt
\end{equation*} 
as $\epsilon\searrow 0$.

for the second last line of \eqref{doubling}, observe that $\nabla_x\eta^\epsilon\left(\frac{x-y}{2}\right)=-\nabla_y\eta^\epsilon\left(\frac{x-y}{2}\right)$ and therefore, by the divergence-free condition on $U$, for the particular choice \eqref{testchoice}, 
\begin{equation*}
\int_0^T\int_{\T^d}\int_{\T^d}\nabla_x\phi(x,y,t)\cdot p(x)U(y)dxdydt=0,
\end{equation*} 
and similarly 
\begin{equation*}
\int_0^T\int_{\T^d}\int_{\T^d}\nabla_y\phi(x,y,t)\cdot P(y)u(x)dxdydt=0.
\end{equation*}
For the second and third integrals of \eqref{doubling}, we compute
\begin{equation}\label{intstep}
\begin{aligned}
&\int_0^T\int_{\T^d}\int_{\T^d}\nabla_x\phi(x,y,t)\cdot u(x))(u(x)\cdot U(y)+\nabla_y\phi(x,y,t)\cdot U(y))(U(y)\cdot u(x)dxdydt\\
&=\int_0^T\chi(t)\int_{\T^d}\int_{\T^d}(u(x)\cdot U(y))\nabla_y\eta^\epsilon\left(\frac{x-y}{2}\right)\cdot \left(U(y)-u(x)\right)dxdydt.
\end{aligned}
\end{equation}
Now we obtain on the one hand (owing to the divergence condition)
\begin{equation*}
\begin{aligned}
&\int_0^T\chi(t)\int_{\T^d}\int_{\T^d}(u(x)\cdot U(y))\nabla_y\eta^\epsilon\left(\frac{x-y}{2}\right)\cdot U(y)dxdydt\\
&=-\int_0^T\chi(t)\int_{\T^d}\int_{\T^d}\eta^\epsilon\left(\frac{x-y}{2}\right)u(x)\cdot\nabla U(y) U(y)dxdydt\\
\end{aligned}
\end{equation*}
and on the other hand
\begin{equation*}
\begin{aligned}
&-\int_0^T\chi(t)\int_{\T^d}\int_{\T^d}(u(x)\cdot U(y))\nabla_y\eta^\epsilon\left(\frac{x-y}{2}\right)\cdot u(x)dxdydt\\
&=\int_0^T\chi(t)\int_{\T^d}\int_{\T^d}\eta^\epsilon\left(\frac{x-y}{2}\right)u(x)\cdot\nabla U(y) u(x)dxdydt,
\end{aligned}
\end{equation*}
so that \eqref{intstep} turns into
\begin{equation*}
\int_0^T\chi(t)\int_{\T^d}\int_{\T^d}\eta^\epsilon\left(\frac{x-y}{2}\right)u(x)\cdot\nabla U(y) (u(x)-U(y))dxdydt
\end{equation*}
and, after another invocation of the identity $\int U\cdot \nabla U wdy=0$ for divergence-free $w$, into 
\begin{equation*}
\int_0^T\chi(t)\int_{\T^d}\int_{\T^d}\eta^\epsilon\left(\frac{x-y}{2}\right)(u(x)-U(y))\cdot\nabla_{sym} U(y) (u(x)-U(y))dxdydt.
\end{equation*}
Now again we can apply Lebesgue's Differentiation Theorem to obtain in the limit $\epsilon\searrow 0$
\begin{equation*}
\int_0^T\chi(t)\int_{\T^d}\int_{\T^d}(u(x)-U(x))\cdot\nabla U(x) (u(x)-U(x))dxdt.
\end{equation*}
To conclude the analysis of equality \eqref{doubling}, we observe that the energy admissibility of $u$ and $U$ implies that the first line on the right hand side of \eqref{doubling} is non-negative for any choice of $\chi\geq0$. 

Putting everything together, we get from \eqref{doubling}
\begin{equation*}
\frac{d}{dt}E_{rel}\leq \left|\int_{\T^d}\int_{\T^d}(u(x)-U(x))\cdot\nabla U(x) (u(x)-U(x))dx\right|\leq \norm{\nabla_{sym} U}_\infty E_{rel},
\end{equation*}
where the relative energy $E_{rel}$ is defined as in the proof of Theorem \ref{basicthm}. Hence, Gr\"onwall's inequality yields once more weak-strong uniqueness as long as $U$ is $C^1$ in space for almost every time, and $\nabla_{sym} U\in L^1((0,T);L^\infty(\T^d))$. Note that no further time regularity is required.  

\begin{remark}
In both proofs of weak-strong uniqueness for the Euler equations, the assumption on $\nabla_{sym}U$ is crucial. It is a major open problem whether uniqueness holds for the Euler equations when the solutions have less than one derivative. In the light of recent progress on Onsager's Conjecture, one conjecture is that weak solutions are unique if they belong to $C^\alpha$ with $\alpha>1/3$. However the regime $1/3<\alpha<1$ is currently completely open in terms of uniqueness. 
\end{remark}

%\bibliographystyle{abbrv}
%\bibliography{onsager}

\begin{thebibliography}{10}

\bibitem{alibert}
J.-J. Alibert and G. Bouchitt\'e.
\newblock Non-uniform integrability and generalized Young measure.
\newblock {\em J. Convex Anal.} {\bf 4}:129--148, 1997.

\bibitem{bardostiti}
C. Bardos and E. S. Titi.
\newblock Loss of smoothness and energy conserving rough weak solutions for the $3d$ Euler equations. 
\newblock {\em Discrete Contin. Dyn. Syst. Ser. S} {\bf 3}(2):185--197, 2010.

\bibitem{eulerboundary}
C. Bardos, L. Sz\'ekelyhidi, Jr., and E. Wiedemann.
\newblock On the absence of uniqueness for the Euler equations: the effect of the boundary. 
\newblock {\em Uspekhi Mat. Nauk} {\bf 69}(2(416)):3--22, 2014.


\bibitem{bourgainli}
J. Bourgain and D. Li.
\newblock Strong ill-posedness of the incompressible Euler equation in borderline Sobolev spaces. 
\newblock {\em Invent. Math.} {\bf 201}(1):97--157, 2015.


\bibitem{brenieretal}
Y.~Brenier, C.~De Lellis, and L.~Sz\'ekelyhidi, Jr.
\newblock Weak-strong uniqueness for measure-valued solutions.
\newblock {\em Comm. Math. Phys.} {\bf 305}(2):351--361, 2011.

\bibitem{brezinamacha}
J. B\v{r}ezina and V. M\'acha.
\newblock Inviscid limit for the compressible Euler system with non-local interactions.
\newblock {\em Preprint.} 

\bibitem{brezinafeireisl}
J. B\v{r}ezina and E. Feireisl.
\newblock Measure-valued solutions to the complete Euler system.
\newblock {\em Preprint.} 



\bibitem{buckmasteretal17}
T.~Buckmaster, C.~De~Lellis, L.~Sz\'ekelyhidi, Jr., and V.~Vicol.
\newblock Onsager's Conjecture for admissible weak solutions.
\newblock {\em Preprint.}



\bibitem{chiodaroli}
E. Chiodaroli.
\newblock A counterexample to well-posedness of entropy solutions to the compressible isentropic Euler system.
\newblock {\em J. Hyperbolic Differ. Equ.} {\bf 11}(3):493--519, 2014.

\bibitem{chiodarolidelelliskreml}
E. Chiodaroli, C. De Lellis, and O. Kreml.
\newblock Global ill-posedness of the isentropic system of gas dynamics.
\newblock {\em Comm. Pure Appl. Math.} {\bf 68}(7):1157--1190, 2015.


\bibitem{constantinfoias}
P.~Constantin and C.~Foias.
\newblock Navier-Stokes equations.
\newblock {Chicago Lectures in Mathematics. \em University of Chicago Press, Chicago, IL,}, 1988. x+190~pp.



\bibitem{Daf}
C.~Dafermos.
\newblock { Hyperbolic conservation laws in continuum physics. Third edition. Grundlehren der Mathematischen Wissenschaften [Fundamental Principles of Mathematical Sciences], 325.}
\newblock {\em Springer-Verlag, Berlin}, 2010. xxxvi+708~pp.



\bibitem{danerisz}
S. Daneri and L. Sz\'ekelyhidi, Jr.
\newblock Non-uniqueness and h-principle for H\"older-continuous weak solutions of the Euler equations. 
\newblock {\em Arch. Ration. Mech. Anal.} {\bf 224}(2):471--514, 2017.

\bibitem{DST}
S. Demoulini, D. Stuart, and A. Tzavaras.
\newblock Weak-strong uniqueness of dissipative measure-valued solutions for polyconvex elastodynamics.
\newblock {\em Arch. Ration. Mech. Anal.} {\bf 205}(3):927--961, 2012.




\bibitem{dipernamajda}
R.~J. DiPerna and A.~J. Majda.
\newblock Oscillations and concentrations in weak solutions of the incompressible fluid equations.
\newblock {\em Comm. Math. Phys.}, {\bf 108}(4):667--689, 1987.

\bibitem{euler1}
C.~De~Lellis and L.~Sz{\'e}kelyhidi, Jr.
\newblock The Euler equations as a differential inclusion.
\newblock {\em Ann. of Math. (2)} {\bf 170}(3):1417--1436, 2009.

\bibitem{euler2}
C.~De~Lellis and L.~Sz{\'e}kelyhidi, Jr.
\newblock On admissibility criteria for weak solutions of the {E}uler
  equations.
\newblock {\em Arch. Ration. Mech. Anal.} {\bf 195}(1):225--260, 2010.

\bibitem{inventiones}
C.~De~Lellis and L.~Sz{\'e}kelyhidi, Jr.
\newblock Dissipative continuous Euler flows.
\newblock {\em Invent. Math.} {\bf 193}(2):377--407, 2013.



\bibitem{feireislnovotnypetzeltova}
E.~Feireisl, A.~Novotn{\' y}, and H.~Petzeltov\'a.
\newblock On the existence of globally defined weak solutions to the Navier-Stokes equations.
\newblock {\em J. Math. Fluid Mech.} {\bf 3}(4):358--392, 2001.

\bibitem{feireislbook}
E. Feireisl.
\newblock Dynamics of viscous compressible fluids. Oxford Lecture Series in Mathematics and its Applications, 26. 
\newblock {\em Oxford University Press, Oxford}, 2004. xii+212~pp.


\bibitem{FeJiNo}
E.~Feireisl, B.~J. Jin, and A.~Novotn{\' y}.
\newblock Relative entropies, suitable weak solutions, and weak-strong
  uniqueness for the compressible {N}avier-{S}tokes system.
\newblock {\em J. Math. Fluid Mech.} {\bf 14}:712--730, 2012.

\bibitem{feireislgwiazdawsu}
E.~Feireisl, P. Gwiazda, A. \'Swierczewska-Gwiazda, and E.~Wiedemann.
\newblock Dissipative measure-valued solutions to the compressible Navier-Stokes system. 
\newblock {\em Calc. Var. Partial Differential Equations} {\bf 55}(6):Art.~141, 20~pp., 2016. 

\bibitem{feireisllukacova}
E.~Feireisl and M. Luk\'a\v{c}ov\'a-Medvid'ov\'a.
\newblock Convergence of a mixed finite element finite volume scheme for the isentropic Navier-Stokes system via dissipative measure-valued solutions.
\newblock {\em Preprint.} 

\bibitem{fjordholmetal}
U. S. Fjordholm, S. Lanthaler, and S. Mishra.
\newblock Statistical solutions of hyperbolic conservation laws I: Foundations.
\newblock {\em Preprint.} 


\bibitem{galdinotes}
G. Galdi.
\newblock An introduction to the Navier-Stokes initial-boundary value problem. {\em Fundamental directions in mathematical fluid mechanics}, 1--70, Adv. Math. Fluid Mech.,
\newblock {\em Birkh\"auser, Basel}, 2000.



\bibitem{sverak}
J. Guillod and V. \v{S}ver\'ak.
\newblock Numerical investigations of non-uniqueness for the Navier-Stokes initial value problem in borderline spaces. 
\newblock {\em Preprint.}

\bibitem{GwSwWi}
P.~Gwiazda, A.~\'Swierczewska-Gwiazda, and E.~Wiedemann.
\newblock Weak-strong uniqueness for measure-valued solutions of some compressible fluid models.
\newblock {\em Nonlinearity} {\bf 28}(11):3873--3890, 2015.


\bibitem{isett16}
P.~Isett.
\newblock A proof of Onsager's Conjecture.
\newblock {\em Preprint.}

\bibitem{kato}
T. Kato.
\newblock Remarks on zero viscosity limit for nonstationary Navier-Stokes flows with boundary. {\em Seminar on nonlinear partial differential equations (Berkeley, Calif., 1983)}, 85--98, Math. Sci. Res. Inst. Publ., 2,
\newblock {\em Springer, New York}, 1984.

\bibitem{leray}
J. Leray.
\newblock Sur le mouvement d'un liquide visqueux emplissant l'espace.
\newblock {\em Acta Math.} {\bf 63}(1):193--248, 1934.




\bibitem{lions}
P.-L. Lions.
\newblock Mathematical topics in fluid dynamics. Vol. 1. Incompressible
  models. Oxford Lecture Series in Mathematics and its Applications, 3. Oxford Science Publications. 
\newblock {\em The Clarendon Press, Oxford University Press, New York}, 1996. xiv+237~pp.

\bibitem{lions2}
P.-L. Lions.
\newblock Mathematical topics in fluid dynamics. Vol. 2. Compressible
  models. Oxford Lecture Series in Mathematics and its Applications, 10. Oxford Science Publications. 
\newblock {\em The Clarendon Press, Oxford University Press, New York}, 1998. xiv+348~pp.


\bibitem{marchioropulvirenti}
C. Marchioro and M. Pulvirenti.
\newblock Mathematical theory of incompressible nonviscous fluids. Applied Mathematical Sciences, 96.
\newblock {\em Springer-Verlag, New York}, 1994. xii+283~pp.


\bibitem{bertozzimajda}
A. Majda and A. Bertozzi.
\newblock Vorticity and incompressible flow. Cambridge Texts in Applied Mathematics, 27.
\newblock {\em Cambridge University Press, Cambridge}, 2002. xii+545~pp.

\bibitem{On1949}
L.~Onsager.
\newblock Statistical hydrodynamics.
\newblock {\em Nuovo Cimento (9)}, 6(Supplemento, 2 (Convegno Internazionale di
  Meccanica Statistica)):279--287, 1949.

\bibitem{prodi}
G.~Prodi.
\newblock Un teorema di unicit\`a per le equazioni di {N}avier-{S}tokes.
\newblock {\em Ann. Mat. Pura Appl.} {\bf 48}:173--182, 1959.


\bibitem{robinsonetal}
J. Robinson, J. Rodrigo, and W. Sadowski.
\newblock The three-dimensional Navier-Stokes equations. Classical theory. Cambridge Studies in Advanced Mathematics, 157.
\newblock {\em Cambridge University Press, Cambridge}, 2016. xiv+471~pp.

\bibitem{saintraymond}
L. Saint-Raymond.
\newblock Hydrodynamic limits of the Boltzmann equation. Lecture Notes in Mathematics, 1971. 
\newblock {\em Springer-Verlag, Berlin}, 2009. xii+188~pp.

\bibitem{scheffer}
V.~Scheffer.
\newblock An inviscid flow with compact support in space-time.
\newblock {\em J. Geom. Anal.} {\bf 3}(4):343--401, 1993.

\bibitem{serrin}
J.~Serrin.
\newblock The initial value problem for the {N}avier-{S}tokes equations.
\newblock In {\em Nonlinear Problems (Proc. Sympos., Madison, Wis., 1962)}, 69--98. Univ. of Wisconsin Press, Madison, Wisconsin, 1963.



\bibitem{shnirel1}
A.~Shnirelman.
\newblock On the nonuniqueness of weak solution of the Euler equation.
\newblock {\em Comm. Pure. Appl. Math.} {\bf 50}(12):1261--1286, 1997.

\bibitem{vortexsheet}
L. Sz\'ekelyhidi, Jr. 
\newblock Weak solutions to the incompressible Euler equations with vortex sheet initial data.
\newblock {\em C. R. Math. Acad. Sci. Paris} {\bf 349}(19-20):1063--1066, 2011.

\bibitem{euleryoung}
L. Sz\'ekelyhidi, Jr. and E. Wiedemann.
\newblock Young measures generated by ideal incompressible fluid flows.
\newblock {\em Arch. Ration. Mech. Anal.} {\bf 206}(1):333--366, 2012.


\bibitem{temam}
R. Temam.
\newblock Navier-Stokes equations. Theory and numerical analysis. Reprint of the 1984 edition.
\newblock {\em AMS Chelsea Publishing, Providence, RI}, 2001. xiv+408~pp.

\bibitem{eulerexistence}
E. Wiedemann.
\newblock Existence of weak solutions for the incompressible Euler equations.
\newblock {\em Ann. Inst. H. Poincar\'e Anal. Non Lin\'eaire} {\bf 28}(5):727--730, 2011.

\end{thebibliography}
\end{document}